\documentclass[a4paper]{article}
\usepackage{amsmath, amsthm, amssymb, graphicx, booktabs}

\hyphenpenalty=2000

\newcommand{\eps}{\varepsilon}
\newcommand{\prob}[1]{\mathbb{P}\left(#1\right)}
\newcommand{\mean}[1]{\mathbb{E}\left(#1\right)}
\newcommand{\bin}[1]{\operatorname{Bin}\left(#1\right)}
\newcommand{\deriv}[3][]{\frac{\mathrm{d}^{#1}#2}{\mathrm{d}#3^{#1}}}
\newcommand{\pderiv}[3][]{\frac{\partial^{#1}#2}{\partial#3^{#1}}}

\newcommand{\ee}{\mathrm{e}}
\newtheorem{thm}{Theorem}
\newtheorem{lem}[thm]{Lemma}

\title{Preferential attachment with choice}
\author{John Haslegrave and Jonathan Jordan\\
 University of Sheffield}

\begin{document}

\maketitle
\begin{abstract} We consider the degree distributions of preferential attachment random graph models with choice similar to those considered in recent work by Malyshkin and Paquette and Krapivsky and Redner.  In these models a new vertex chooses $r$ vertices according to a preferential rule and connects to the vertex in the selection with the $s$th highest degree.  For meek choice, where $s>1$, we show that both double exponential decay of the degree distribution and condensation-like behaviour are possible, and provide a criterion to distinguish between them.  For greedy choice, where $s=1$, we confirm that the degree distribution asymptotically follows a power law with logarithmic correction when $r=2$ and shows condensation-like behaviour when $r>2$.
\\ AMS 2010 Subject Classification: Primary 05C82. \\ Key words and phrases:random graphs; preferential
attachment; choice.\end{abstract}
\section{Introduction}

Since the introduction of the preferential attachment model for growing random graphs by Barab\'{a}si and Albert \cite{scalefree1999} and its mathematical analysis by Bollob\'{a}s, Riordan, Spencer and Tusn{\'a}dy \cite{BRST}, a number of variants on the model have been investigated.  One variant which has been of some recent interest, studied by Krapivsky and Redner \cite{KR2013} and Malyshkin and Paquette \cite{MalPaq1,MalPaq2}, is where a new vertex which joins the graph selects a number of potential neighbours at random according to the preferential attachment rule, and then chooses which one to connect to according to a deterministic criterion.

In \cite{MalPaq1} a new vertex joining the graph picks two vertices at random with probability proportional to their degrees (the preferential attachment rule) and then connects to the one with the smaller degree; this model is referred to as the \emph{min-choice preferential attachment tree}.  Similarly in \cite{MalPaq2} the analogous model where the new vertex picks the larger of the two vertices is considered, and this is the \emph{max-choice preferential attachment tree}.  In \cite{KR2013} this is generalised to the new vertex picking $r$ vertices at random according to the preferential attachment rule and then picking the one with the $s$th highest degree to connect to; it is this latter model we will consider.  Where $s=1$ this is referred to in \cite{KR2013} as \emph{greedy choice}, and where $s>1$ this is referred to in \cite{KR2013} as \emph{meek choice}.

The main interest in \cite{KR2013,MalPaq1} is in the asymptotic degree distributions of these graphs.  There are various possibilities.  For meek choice with $r=s=2$ (that is, min-choice preferential attachment) it is shown in \cite{MalPaq1} that the degree distribution has a doubly exponential decay, and it is conjectured in \cite{KR2013} that this applies for all cases of meek choice.  For greedy choice, \cite{KR2013} suggests that there is differing behaviour when $r=2$ from when $r>2$; in the former case there is a limiting degree distribution with a tail which is a power law with index $-2$ with a logarithmic correction, while in the latter case the limit is degenerate, in the sense that the limiting degree distribution sums to a value strictly less than $1$, and a single vertex (a \emph{macroscopic hub}) has a degree which grows as a positive fraction of the size of the graph.  This degenerate limit behaviour has some resemblance to the \emph{condensation} phenomenon observed for preferential attachment with fitness by Borgs, Chayes, Daskalakis and Roch \cite{fitness}, in that in the limit a proportion of vertices tending to zero take a positive proportion of the edges.

We will confirm the conjectures of \cite{KR2013} in the case of greedy choice and in some cases of meek choice.  However, for other cases of meek choice (in particular for $s=2$ and $r\geq 7$), we will show that in fact the behaviour is that of a degenerate limit with condensation similar to that for greedy choice with $r>2$, not the doubly exponential decay suggested by \cite{KR2013}.

The layout of the paper is as follows. In Section 2 we define the model and motivate the definition of the sequence $p_k$. In Section 3 we prove that this sequence is well-defined and describes the limiting distribution of a single preferential choice. In Section 4 we establish the limit of the $p_k$; we have a non-degenerate limiting distribution if and only if this limit is 1, and we bound the values of $r$ (in terms of $s$) for which this occurs. In Section 5 we prove that, for $s\geq 2$, the distribution will either be degenerate or have a doubly-exponential tail. In Section 6 we consider the case $r=2,s=1$, which is the only non-degenerate case for $s=1$ other than standard preferential attachment.

To some extent our methods follow, and can be seen as a generalisation of, those of \cite{MalPaq1}, where Malyshkin and Paquette analyse the case $r=s=2$. By contrast, \cite{MalPaq2} uses different methods and is largely concerned with the maximum degree for $s=1$. In Section 3 we prove that the probability of a single preferential choice having degree at most $k$ converges to $p_k$. We show this by induction; provided convergence occurs for $k$ then eventually the expected change in the probability for $k+1$ will tend to correct any difference from $p_{k+1}$. This convergence result, and the doubly-exponential decay in Lemmas 9 and 10, are similar to those obtained in \cite{MalPaq1}. However, while they are able to check by direct calculation that $1-p_{10}$ is small enough to start the doubly-exponential decay, we must prove that there is some suitable starting point in every non-degenerate case. We do this by showing that if there is decay to 0 then it is at least quadratic. Our methods in Section 4 are new; we prove that the $p_k$ converge to the smallest positive root of a certain polynomial function and show that this function is always positive if $r\leq 2s$, but that it becomes negative at a suitably-chosen point if $r$ is slightly larger than this, thus obtaining the dichotomy between doubly-exponential decay and condensation-like behaviour.

\section{Definitions and results}
Fix integers $r,s$ with $r\geq s\geq 1$. We grow a tree, starting from the two-vertex tree at time 1. At each time step we select an ordered $r$-tuple of vertices (with replacement, so that the same vertex may appear more than once), where each choice is independent and vertices are selected with probability proportional to their degree. We then add a new vertex attached to the vertex with rank $s$ (by degree) among the $r$ vertices chosen, breaking ties uniformly at random. We will generally think of $r$ being at least 2, since the case $r=s=1$ is standard preferential attachment.

\begin{figure}[ht]
\begin{center}
\begin{tabular}{ccc}
\includegraphics[width=.3\textwidth]{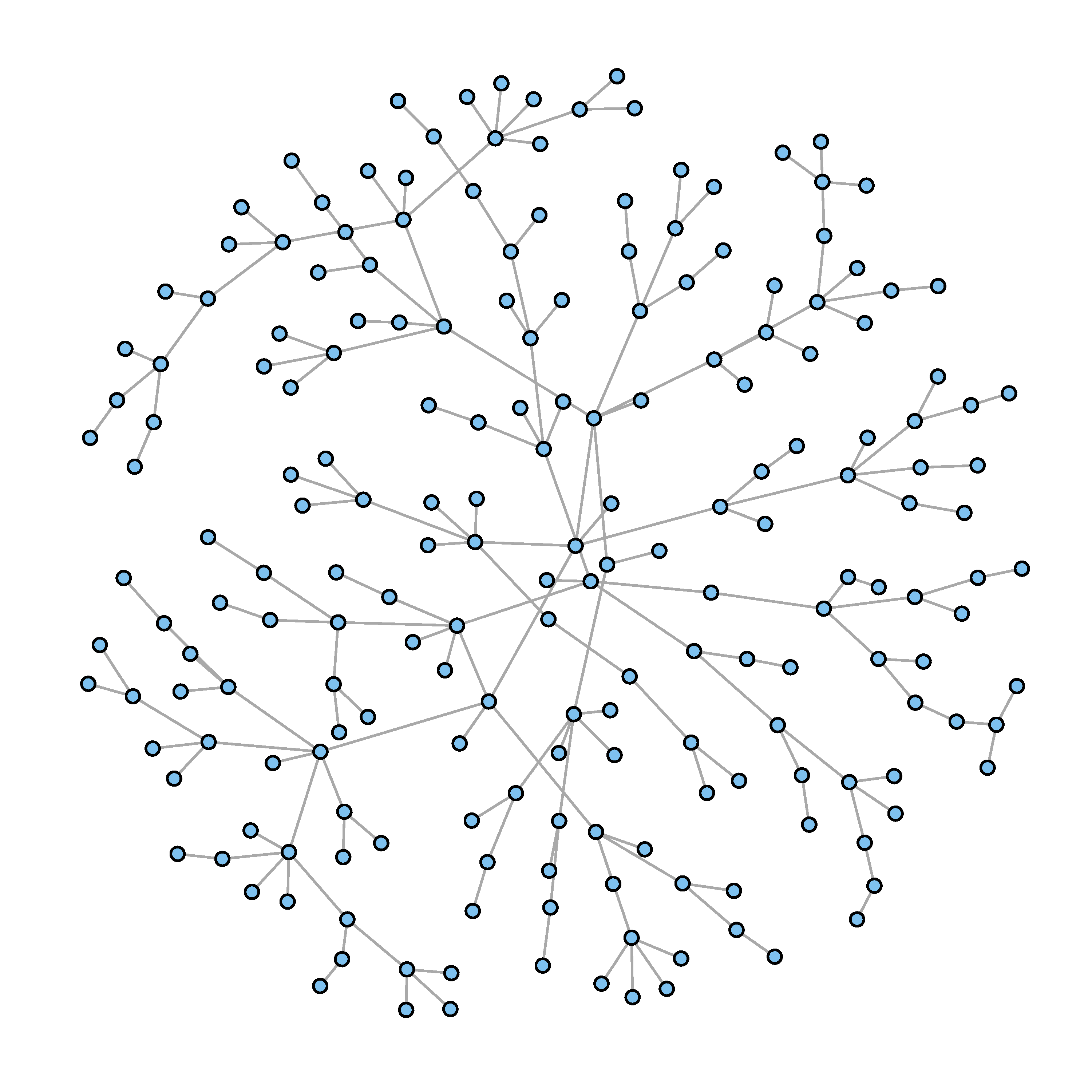} & \includegraphics[width=.3\textwidth]{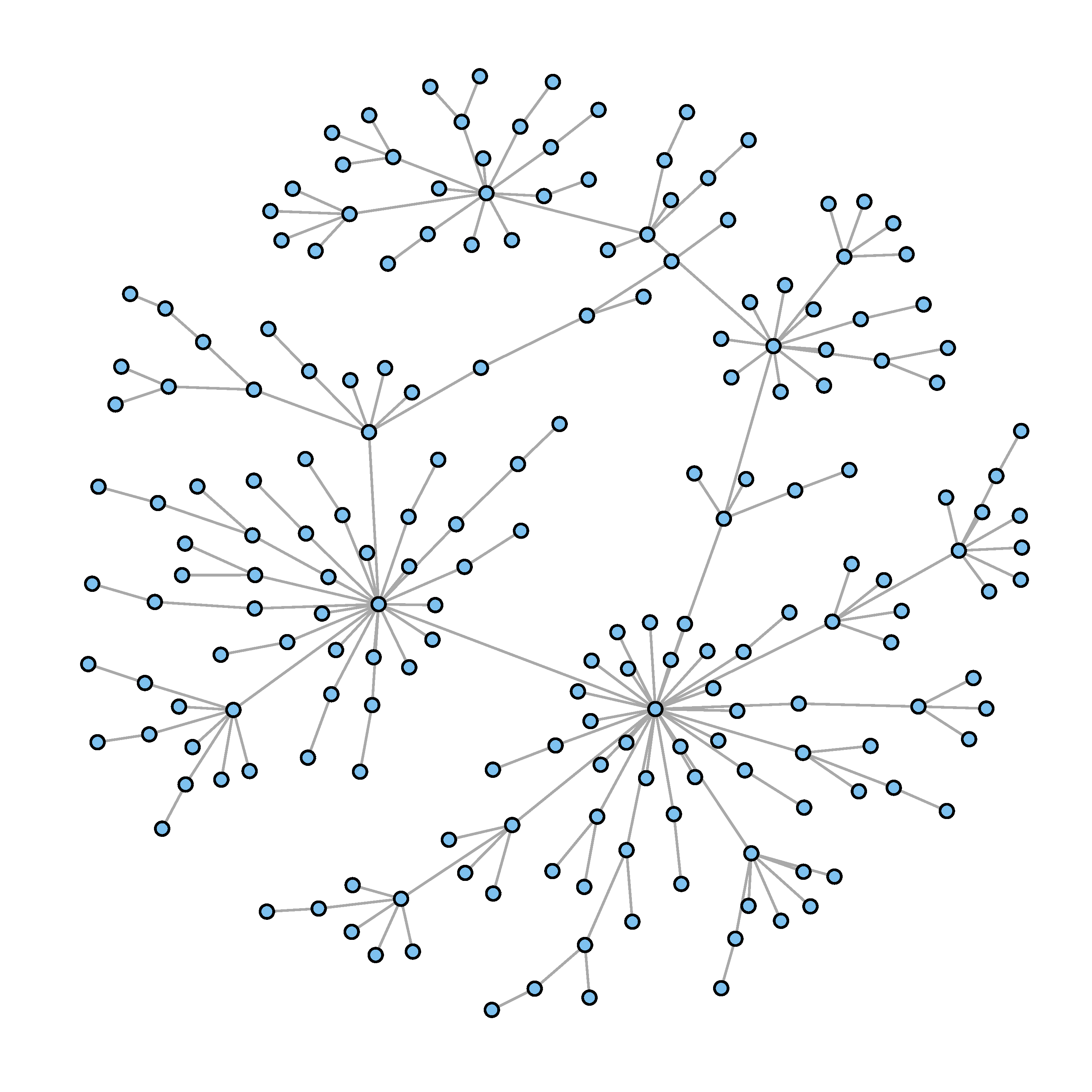} & \includegraphics[width=.3\textwidth]{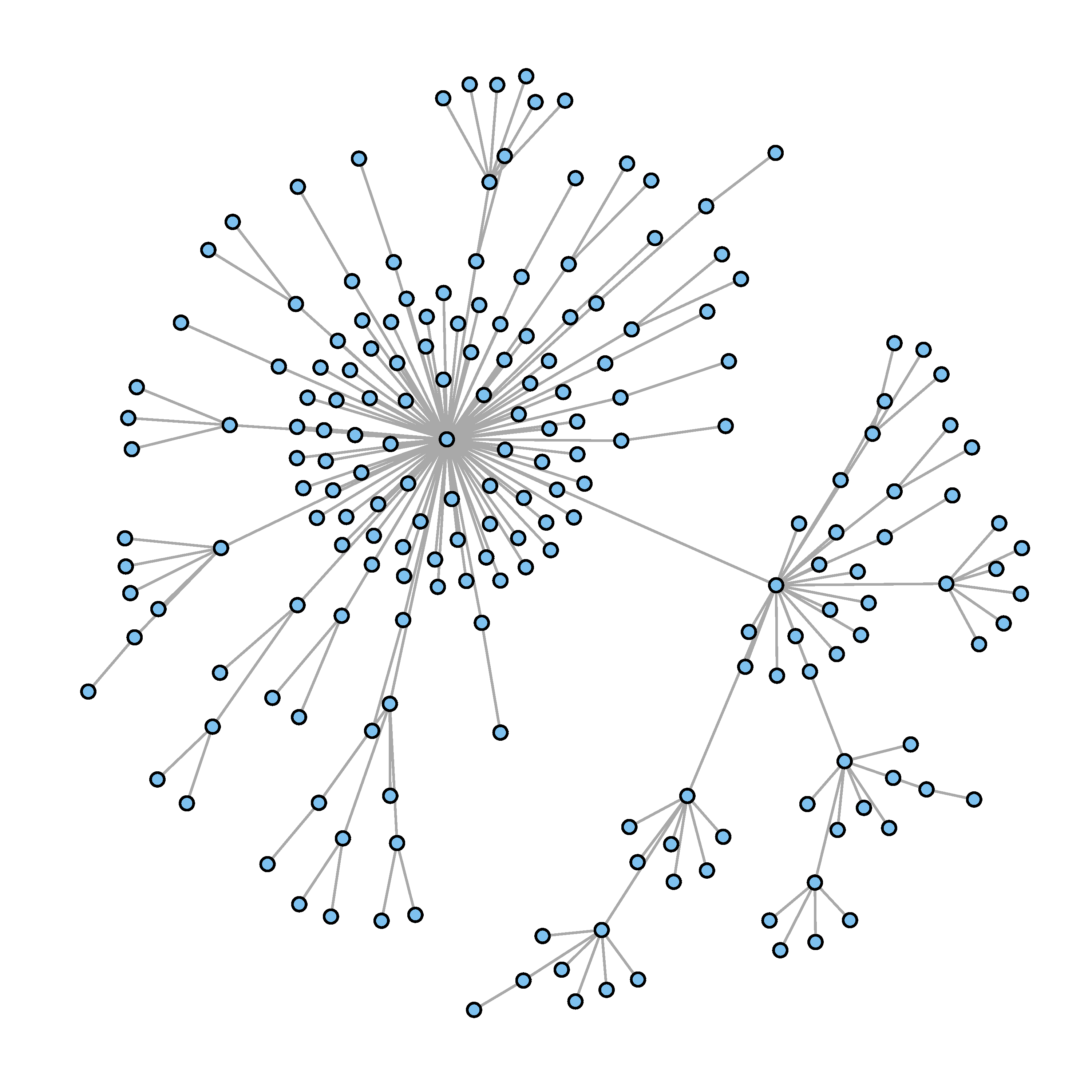}
\end{tabular}
\caption{200-vertex simulations, produced using igraph in R. Left to right: minimum choice, standard preferential attachment, maximum choice. The maximum degrees in these simulations are are 6, 30 and 90 respectively.}
\end{center}
\end{figure}

As the graph grows, the distribution of the degree of a single vertex obtained by a preferential choice will change. We show that it approaches a particular distribution, which depends only on $r$ and $s$, with high probability. We consider the limiting probability that a single preferential choice will give a vertex of degree at most $k$. 

Define $B_{r,s}(p)$ to be the probability that a $\bin{r,p}$ random variable takes a value greater than $r-s$. Write $F_m(k)$ for the sum of degrees of vertices with degree at most $k$ at time $m$, so that $F_m(k)/2m$ is the probability of selecting a vertex of degree at most $k$ with a single preferential choice. 

\begin{lem}\label{markov}The value of $F_m(k)$ evolves as a Markov process where
\[
F_{m+1}(k)-F_m(k)=
\begin{cases}
1 &\text{with prob }\quad 1-B_{r,s}(F_m(k)/2m)\,; \\
1-k &\text{with prob }\quad B_{r,s}(F_m(k)/2m)-B_{r,s}(F_m(k-1)/2m)\,; \\
2 &\text{with prob }\quad B_{r,s}(F_m(k-1)/2m)\,.
\end{cases}
\]
\end{lem}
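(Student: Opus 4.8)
The plan is to condition on the degree $d$ of the vertex to which the new vertex attaches at step $m+1$, and to observe that, given $d$, the increment $F_{m+1}(k)-F_m(k)$ is deterministic. A new degree-$1$ vertex is always added, contributing $1$ to $F_{m+1}(k)$ (since $k\geq 1$), while the attached vertex has its degree raised from $d$ to $d+1$. Writing out the three ways this reclassifies the attached vertex relative to the threshold $k$---namely $d\leq k-1$ (still counted, weight up by one), $d=k$ (drops out, losing weight $k$), and $d\geq k+1$ (never counted)---gives net increments $2$, $1-k$ and $1$ respectively. This is a short case check and matches the three cases in the statement.

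The heart of the argument is therefore to compute the distribution of $d$, and specifically to show
\[
\prob{\text{attached vertex has degree}\leq k}=B_{r,s}(F_m(k)/2m).
\]
Each of the $r$ choices is made independently with probability proportional to degree, so each selects a vertex of degree at most $k$ with probability exactly $F_m(k)/2m=:p$. Letting $X$ denote the number of the $r$ chosen vertices of degree at most $k$, we have $X\sim\bin{r,p}$. The attached vertex is the one of rank $s$ by degree, and its degree is at most $k$ precisely when fewer than $s$ of the chosen vertices exceed $k$ in degree, i.e.\ when $X\geq r-s+1$, equivalently $X>r-s$. Crucially this event is determined by the multiset of chosen degrees alone and is unaffected by the uniform tie-breaking, since that only decides which vertex of the rank-$s$ degree receives the edge, not the value of that degree. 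Hence the probability is $\prob{X>r-s}=B_{r,s}(p)$.

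Combining these, I would record $\prob{d\leq k}=B_{r,s}(F_m(k)/2m)$ and the analogous identity at $k-1$, then read off the three required probabilities by differencing: $\prob{d\leq k-1}=B_{r,s}(F_m(k-1)/2m)$ gives the increment $2$; $\prob{d=k}=B_{r,s}(F_m(k)/2m)-B_{r,s}(F_m(k-1)/2m)$ gives the increment $1-k$; and $\prob{d\geq k+1}=1-B_{r,s}(F_m(k)/2m)$ gives the increment $1$. Finally, the Markov assertion follows because, conditional on the entire history up to time $m$, this transition law depends on that history only through the current values $F_m(k)$ and $F_m(k-1)$.

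The main obstacle is not any heavy computation but the care needed at the ranking-and-ties step: one must verify cleanly that ``the rank-$s$ vertex has degree at most $k$'' coincides with the purely combinatorial event $\{X>r-s\}$ regardless of ties, and that the $r$ selections really do yield independent Bernoulli$(F_m(k)/2m)$ indicators---it is independence together with the with-replacement sampling that makes $X$ exactly binomial rather than hypergeometric. A minor point worth flagging is that the three cases as stated presuppose $k\geq 1$, which is the only range of interest, since every vertex has degree at least one and hence $F_m(0)\equiv 0$.
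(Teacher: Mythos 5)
Your proposal is correct and follows essentially the same route as the paper's proof: a case analysis on the degree $d$ of the attached vertex to get the three increments, followed by the observation that the rank-$s$ vertex has degree at most $j$ exactly when more than $r-s$ of the $r$ independent choices (each Bernoulli with parameter $F_m(j)/2m$) land on vertices of degree at most $j$, giving $B_{r,s}(F_m(j)/2m)$. Your explicit handling of the tie-breaking and the with-replacement point is slightly more careful than the paper's one-line treatment, but it is the same argument.
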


In order for $F_m(k)/2m$ and $F_m(k-1)/2m$ to tend to limits of $x$ and $p$ respectively, we should expect $\mean{F_{m+1}(k)-F_m(k)}\approx 2x$ when $F_m(k)/2m\approx x$ and $F_m(k-1)/2m\approx p$, and so $(k+1)B_{r,s}(p)-kB_{r,s}(x)+1=2x$.

Based on this, write
\[
f_k(x,p)=(k+1)B_{r,s}(p)-kB_{r,s}(x)-2x+1
\]
for $x,p\in[0,1]$ and $k\geq 0$. 

\begin{lem}\label{pk}For any $p\in[0,1)$ there is a unique $x\in(0,1)$ with $f_k(x,p)=0$.
\end{lem}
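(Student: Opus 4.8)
The plan is to fix $p\in[0,1)$ and $k\geq 0$ and study $g(x):=f_k(x,p)=(k+1)B_{r,s}(p)-kB_{r,s}(x)-2x+1$ as a function of the single variable $x\in[0,1]$. Existence of a root I would get from the intermediate value theorem by evaluating the endpoints, and uniqueness from showing that $g$ is strictly decreasing.

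First I would record the endpoint values of $B_{r,s}$. Since a $\bin{r,0}$ variable is identically $0$ and $r-s\geq 0$, the event ``$>r-s$'' is impossible, so $B_{r,s}(0)=0$; since a $\bin{r,1}$ variable is identically $r$ and $s\geq 1$, we have $r>r-s$, so $B_{r,s}(1)=1$. This gives $g(0)=(k+1)B_{r,s}(p)+1>0$ and $g(1)=(k+1)\bigl(B_{r,s}(p)-1\bigr)$. To see that $g(1)<0$ I would note that for $p<1$ the complementary event satisfies $\prob{\bin{r,p}\leq r-s}\geq\prob{\bin{r,p}=0}=(1-p)^r>0$, again using $r-s\geq 0$, so $B_{r,s}(p)<1$. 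This is exactly where the hypothesis $p<1$ is needed: at $p=1$ the root would sit at the boundary $x=1$. Hence $g(0)>0>g(1)$, and as $g$ is continuous the intermediate value theorem yields a root in $(0,1)$.

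For uniqueness I would show $g$ is strictly monotone. The only $x$-dependence beyond the linear term $-2x$ lies in $-kB_{r,s}(x)$, so it suffices that $B_{r,s}$ is non-decreasing. This is the standard fact that a binomial tail probability is increasing in its parameter: either via the coupling that realises $\bin{r,x}$ as a sum of indicators monotone in $x$, or by the identity $B_{r,s}'(x)=\tfrac{r!}{(r-s)!\,(s-1)!}\,x^{r-s}(1-x)^{s-1}\geq 0$, obtained by differentiating the tail sum term by term and telescoping. Either way $g'(x)=-kB_{r,s}'(x)-2\leq -2<0$ (the bound being exact when $k=0$), so $g$ is strictly decreasing on $[0,1]$ and the root found above is its unique zero.

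I do not expect any serious obstacle here, since the whole argument is a one-variable intermediate-value-plus-monotonicity computation. The only points needing a little care are the correct evaluation of the endpoints $B_{r,s}(0)$ and $B_{r,s}(1)$, which rely on $1\leq s\leq r$, and the strict inequality $B_{r,s}(p)<1$ for $p<1$, which is what prevents the root from escaping to the boundary.
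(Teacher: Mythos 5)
Your proposal is correct and follows essentially the same route as the paper: evaluate $f_k(0,p)$ and $f_k(1,p)$ using $B_{r,s}(0)=0$ and $B_{r,s}(1)=1$, apply the intermediate value theorem, and get uniqueness from strict monotonicity in $x$, with your derivative formula $\tfrac{r!}{(r-s)!(s-1)!}x^{r-s}(1-x)^{s-1}$ agreeing with the paper's $r\binom{r-1}{s-1}x^{r-s}(1-x)^{s-1}$. Your only departures are cosmetic: you spell out why $B_{r,s}(p)<1$ for $p<1$, and you note that mere monotonicity of $B_{r,s}$ suffices since the $-2x$ term already forces strict decrease.
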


Define a sequence $(p_k)_{k\geq 0}$ by setting $p_0=0$ and for each $k\geq 0$ letting $p_k$ be the unique value in $(0,1)$ such that $f_k(p_k,p_{k-1})=0$. We show in Theorem \ref{converge} that $F_m(k)/2m$ converges to $p_k$ with probability 1 as $m\to\infty$.

\begin{thm}\label{converge}For any $\varepsilon>0$, $k\geq 0$ and $\alpha<1$,
\[
\prob{|F_{m'}(k)/2m'-p_k|<\varepsilon\quad\forall m'\geq m}=1-O(\exp(m^\alpha))\,.
\]
\end{thm}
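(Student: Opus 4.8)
The plan is to induct on $k$, using Lemma~\ref{markov} to recast $X_m:=F_m(k)/2m$ as a stochastic-approximation process whose drift vanishes exactly at $p_k$. The base case $k=0$ is immediate: every vertex of the tree has degree at least $1$, so $F_m(0)=0=2m\,p_0$ for all $m$ and the asserted event has probability $1$.

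For the inductive step I first compute the conditional drift. Writing $q_m:=F_m(k-1)/2m$ and letting $\mathcal F_m$ be the natural filtration, Lemma~\ref{markov} gives
\[
\mean{F_{m+1}(k)-F_m(k)\mid\mathcal F_m}=(k+1)B_{r,s}(q_m)-kB_{r,s}(X_m)+1=f_k(X_m,q_m)+2X_m,
\]
and a short computation from $X_{m+1}=F_{m+1}(k)/\bigl(2(m+1)\bigr)$ then yields, after cancellation, the exact identity
\[
\mean{X_{m+1}-X_m\mid\mathcal F_m}=\frac{f_k(X_m,q_m)}{2(m+1)}.
\]
Since $\pderiv{f_k}{x}=-kB_{r,s}'(x)-2\le-2$, the map $f_k(\cdot,p)$ is strictly decreasing through its unique root $x^\ast(p)$ from Lemma~\ref{pk}, so the drift is \emph{restoring} — positive below $x^\ast(q_m)$, negative above — with $|f_k(x,q_m)|\ge2|x-x^\ast(q_m)|$ throughout $[0,1]$. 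As $B_{r,s}$ is a polynomial, $x^\ast$ is continuous (indeed Lipschitz) with $x^\ast(p_{k-1})=p_k$; hence a sufficiently small $\delta$ guarantees that, whenever $q_{m'}$ lies within $\delta$ of $p_{k-1}$, the root $x^\ast(q_{m'})$ lies within $\eps/4$ of $p_k$ and the drift on $X_{m'}$ is restoring and of magnitude at least $c\eps/(m'+1)$ for a constant $c>0$ whenever $|X_{m'}-p_k|\ge\eps/2$.

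The process therefore satisfies $X_{m+1}-X_m=\frac{f_k(X_m,q_m)}{2(m+1)}+\xi_{m+1}$ with a martingale increment bounded by $|\xi_{m+1}|\le C_k/(m+1)$, because $F_{m+1}(k)-F_m(k)\in\{1-k,1,2\}$. The heart of the proof is a confinement estimate carried out on the good event $G^{(k-1)}_{m_0}=\{|q_{m'}-p_{k-1}|<\delta\ \forall\,m'\ge m_0\}$ supplied by the inductive hypothesis. I would split it into an entering stage on $[m_0,m]$, where the logarithmically accumulating drift forces the mean deviation to shrink by at least the factor $m_0/m$, so that taking $m_0$ a fixed power of $\eps$ times $m$ contracts any initial deviation down to $\eps/2$; and a staying stage on $[m,\infty)$, where leaving the tube $(p_k-\eps,p_k+\eps)$ forces the tail martingale $\sum_{j\ge m}\xi_j$, of variance $O(C_k^2/m)$, to exceed $\eps/2$. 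A Freedman/Azuma maximal inequality bounds each stage's failure probability by $\exp\bigl(-c(\eps,k)\,m\bigr)$.

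The main obstacle is propagating the induction with the stated tail $O(\exp(-m^\alpha))$: the confinement constant $c(\eps,k)$ and the time-shift ratio $m_0/m$ depend on $\eps$ and degrade with $k$, so a clean $\exp(-m)$ is not available. The device that resolves this — and is exactly why only $\alpha<1$ is claimed — is to prove level $k$ with exponent $\alpha$ while invoking the level-$(k-1)$ hypothesis with a strictly larger exponent $\alpha_1\in(\alpha,1)$. Because $(\eps^Cm)^{\alpha_1}$ exceeds $m^\alpha$ by a polynomial factor for large $m$, every $\eps$- and $k$-dependent constant, both in the Freedman bounds $\exp(-c(\eps,k)m)$ and in the shifted inductive estimate $\exp(-m_0^{\alpha_1})$, is absorbed, and the three contributions combine to $1-O(\exp(-m^\alpha))$. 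The remaining work — the Lipschitz bound on $x^\ast$ near $p_{k-1}$ and the verification that the entering stage reaches $\eps/2$ within the window $[m_0,m]$ — is routine.
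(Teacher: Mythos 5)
Your proposal is sound, and its skeleton matches the paper's proof: induction on $k$ with a trivial base case, conditioning on the level-$(k-1)$ good event supplied by the inductive hypothesis, a reach-the-tube phase followed by a stay-in-the-tube phase, each failing with probability exponentially small in $m$, and a deliberate degradation of the exponent (legitimate because the statement is quantified over all $\alpha<1$) to absorb the inductive cost. The machinery, however, is genuinely different. The paper never computes the drift of $X_m=F_m(k)/2m$: it uses Lemma \ref{markov} to dominate the increments $F_{m+1}(k)-F_m(k)$ from above and below by iid variables $X^{\pm}$ whose means straddle $2p_k$ once $\delta$ is small, so that every estimate reduces to Hoeffding's inequality for iid sums, and it handles the staying phase by an excursion decomposition (stopping times $s_1,t_1,s_2,t_2,\dots$) whose exponentially small failure probabilities are then summed. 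Your exact identity $\mean{X_{m+1}-X_m\mid\mathcal{F}_m}=f_k(X_m,q_m)/\bigl(2(m+1)\bigr)$, combined with $\pderiv{f_k}{x}\le-2$, is instead the stochastic-approximation route: the restoring drift is explicit, and the staying phase follows from a single Azuma--Freedman maximal inequality applied to the tail martingale of variance $O(C_k^2/m)$, which is arguably cleaner than the excursion decomposition. The exponent bookkeeping also differs: the paper starts confinement at time $m^{\beta}$ with $\beta=\sqrt{\alpha}$ and invokes the inductive hypothesis with exponent $\beta$, so the inductive term is exactly $\exp(-m^{\alpha})$, whereas you start at $m_0=\eps^{C}m$ and invoke level $k-1$ with a strictly larger exponent $\alpha_1\in(\alpha,1)$, making the inductive term $o(\exp(-m^{\alpha}))$; both devices close the induction. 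Two precision points for a full write-up: exiting the $\eps$-tube from the $\eps/2$-tube forces an \emph{increment} of the tail martingale between two random times (not a single partial sum) to exceed roughly $\eps/2$, so the maximal inequality should be invoked at level $\eps/4$; and in the entering phase one must contract the deviation from the fixed point $p_k$ rather than from the moving root $x^*(q_j)$, which itself shifts by $O(1/j)$ per step --- your restoring-drift statement phrased in terms of $|X_{m'}-p_k|$ already does this correctly, and the paper's domination trick sidesteps both issues automatically.
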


We then consider the behaviour of the sequence $p_k$ for $k$ large, and prove the following results. We do not consider the case $r=1$, since this is standard preferential attachment and so has $1-p_k=\Theta(k^{-5})$.

\begin{thm}\label{main}The sequence $p_k$ is increasing with limit $p_*\leq 1$, where $p_*$ is the smallest positive root of $B_{r,s}(p)-2p+1=0$. There exists a function $r(s)$ such that $p_*=1$ if and only if $r<r(s)$, which satisfies $r(s)=2s+o(s)$ but also $r(s)=2s+\omega(\sqrt{s})$.

Provided $s\geq 2$, if $p_*=1$ then $-\log(1-p_k)=\Omega(s^{k})$. The only other case with $r>1$ where $p_*=1$ is $r=2,s=1$, and then $1-p_k=(2+o(1))/\log k$.
\end{thm}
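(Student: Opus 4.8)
The plan is to organise everything around the single-variable function $h(p):=B_{r,s}(p)-2p+1$, whose sign controls the recursion for $p_k$, and then treat the four assertions in turn.

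\medskip
\emph{Monotone convergence.} First I would record the identity $f_{k+1}(p_k,p_k)=h(p_k)$ together with the fact that $x\mapsto f_{k+1}(x,p_k)$ is strictly decreasing, so that $p_{k+1}>p_k$ exactly when $h(p_k)>0$. Since $h(0)=1>0$ and $h$ is continuous, the smallest positive root $p_*$ satisfies $p_*>0$ and $h>0$ on $[0,p_*)$. A short induction then gives $p_k<p_*$: using $h(p_*)=0$ to substitute $1-2p_*=-B_{r,s}(p_*)$ yields $f_k(p_*,p_{k-1})=(k+1)\bigl(B_{r,s}(p_{k-1})-B_{r,s}(p_*)\bigr)<0$, whence $p_k<p_*$ because $f_k(\cdot,p_{k-1})$ is decreasing and $B_{r,s}$ strictly increasing. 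Thus $h(p_k)>0$ for all $k$, so $(p_k)$ increases to a limit $L\le p_*$. Rewriting the defining equation as $k\bigl(B_{r,s}(p_k)-B_{r,s}(p_{k-1})\bigr)=B_{r,s}(p_{k-1})-2p_k+1\to h(L)$ and noting that $\sum_k\bigl(B_{r,s}(p_k)-B_{r,s}(p_{k-1})\bigr)=B_{r,s}(L)$ telescopes (hence converges), I conclude $h(L)=0$ (otherwise the summand would be $\sim h(L)/k$ and the series would diverge); since $h>0$ on $[0,p_*)$ this forces $L=p_*$.

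\medskip
\emph{The threshold $r(s)$.} Existence of $r(s)$ comes from writing $B_{r,s}(p)=\prob{\bin{r,1-p}\le s-1}$, which is decreasing in $r$; hence $h$ is decreasing in $r$ and $\{r:p_*=1\}$ is a down-set. For its location I would use the reformulation, from $h(1-q)=2q-G(q)$, that $p_*=1$ if and only if $G(q):=\prob{\bin{r,q}\ge s}$ satisfies $G(q)<2q$ for all $q\in(0,1)$. Since $G'(q)=s\binom{r}{s}q^{s-1}(1-q)^{r-s}$ is the $\mathrm{Beta}(s,r-s+1)$ density, the quantity $2q-G(q)$ has its only interior minimum at the larger root $q_+$ of $G'=2$, lying just to the right of the mode $\approx\tfrac12-\tfrac{r-2s}{4s}$; evaluating $G$ and $2q$ there with Gaussian tail estimates gives a minimum value $\approx\sqrt{\log s}/\sqrt{2s}-(r-2s)/(2s)$, changing sign near $r-2s\approx\sqrt{2s\log s}$. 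Turning this into rigorous bounds is the main obstacle: a lower tail bound forcing $2q-G(q)<0$ at a chosen $q$ when $r-2s$ is a little above $\sqrt{2s\log s}=o(s)$ (giving $r(s)=2s+o(s)$), and a matching upper tail bound keeping $2q-G(q)>0$ throughout when $r-2s=\omega(\sqrt s)$ is a little below it (giving $r(s)=2s+\omega(\sqrt s)$). The relevant $q$ sits several standard deviations from the mean, so the heuristic must be replaced by honest estimates uniform in $s$; I would likely first establish the clean stepping-stone that $r\le 2s$ always gives $p_*=1$.

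\medskip
\emph{Double-exponential decay for $s\ge2$.} Put $q_k=1-p_k\downarrow0$. From $B_{r,s}(1-q)=1-\binom{r}{s}q^s(1+O(q))$ the defining relation becomes $2q_k+k\binom{r}{s}q_k^s(1+o(1))=(k+1)\binom{r}{s}q_{k-1}^s(1+o(1))$, giving the upper bound $q_k\le\tfrac12(k+1)\binom{r}{s}q_{k-1}^s(1+o(1))$. With $a_k=-\log q_k$ this reads $a_k\ge sa_{k-1}-\log(k+1)-C$, and the substitution $b_k=a_k-\tfrac1{s-1}\log(k+1)$ absorbs the logarithm into $b_k\ge sb_{k-1}-C''$; once $b_{k_0}$ exceeds the fixed point $C''/(s-1)$ one gets $b_k-\tfrac{C''}{s-1}\ge s^{\,k-k_0}\bigl(b_{k_0}-\tfrac{C''}{s-1}\bigr)$, hence $-\log(1-p_k)=\Omega(s^k)$. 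It remains to exhibit a starting index with $q_{k_0}\le C_\ast(k_0+1)^{-1/(s-1)}$; I would obtain this by ruling out slower decay, showing that if instead $q_k\ge C_\ast(k+1)^{-1/(s-1)}$ for all large $k$ then the same relation forces $q_{k-1}^{s-1}-q_k^{s-1}\ge c/k$, contradicting convergence of the telescoping sum $\sum_k\bigl(q_{k-1}^{s-1}-q_k^{s-1}\bigr)=q_{k_1}^{s-1}$. (The hypothesis $s\ge2$ enters both through the contraction exponent and through $h(1-q)\sim 2q$, which fails for $s=1$.)

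\medskip
\emph{The case $r=2,s=1$.} Here $B_{2,1}(p)=p^2$, so $h(p)=(1-p)^2$ and $p_*=1$. The relation simplifies exactly to $2(k+1)(q_{k-1}-q_k)=(k+1)q_{k-1}^2-kq_k^2$, and writing $q_{k-1}=q_k+d_k$ gives $d_k\bigl(1-q_k-\tfrac{d_k}{2}\bigr)=\tfrac{q_k^2}{2(k+1)}$, so $q_{k-1}-q_k=\tfrac{q_k^2}{2(k+1)}(1+o(1))$ as $q_k\to0$. Then $\tfrac1{q_k}-\tfrac1{q_{k-1}}=\tfrac{d_k}{q_kq_{k-1}}=\tfrac1{2k}(1+o(1))$, and summing against $\sum_{j\le k}1/j\sim\log k$ yields $1/q_k\sim\tfrac12\log k$, that is $1-p_k=(2+o(1))/\log k$. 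The only care needed is the usual $\eps$-sandwich of the $o(1)$ before summation, which is legitimate because $q_k\to0$ by the first part.
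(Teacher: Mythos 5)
Two of your four parts contain genuine gaps; I take them in turn, then note what does work.

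The assertions $r(s)=2s+o(s)$ and $r(s)=2s+\omega(\sqrt{s})$ are where the real work of the theorem lies, and your proposal leaves them at the level of a heuristic: you locate the interior minimum of $2q-G(q)$, assert its approximate value via Gaussian tail estimates, and then concede that ``turning this into rigorous bounds is the main obstacle.'' That concession is the gap --- the statement is simply not proved. Note also that the two directions need different kinds of argument. The upper bound is the easier one (the paper fixes $p\in(\tfrac12,\tfrac{1+\eps}{2+\eps})$ and applies Hoeffding to get $B_{r,s}(p)\leq\exp(-2(r(1-p)-s)^2/r)$, exponentially small in $s$ when $r\geq(2+\eps)s$, so $f(p)<0$). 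The lower bound is harder precisely because positivity of $2q-G(q)$ must hold \emph{uniformly} on $q\in(0,1/2]$, not at one point: the paper splits the interval into three ranges, using a normal-approximation bound $f(1/2)>\tfrac12\Phi(-c/\sqrt2)$ together with $f'\geq-2$ near $1/2$, Hoeffding in the middle, and a derivative estimate $f'(p)<-1$ with $f(1)=0$ near $1$. Even a rigorous evaluation at your candidate minimum would not substitute for this without also proving, uniformly in $s$, that it is the global minimum on the interval.

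In the double-exponential part, your contraction step (from $q_k\leq\tfrac{k+1}{2}\binom{r}{s}q_{k-1}^s$ to $-\log q_k=\Omega(s^k)$ via $b_k=a_k-\tfrac{1}{s-1}\log(k+1)$) is correct and is a clean substitute for the paper's Lemma \ref{doubexp}. The gap is the existence of the starting index. You claim that if $q_k\geq C_*(k+1)^{-1/(s-1)}$ for all large $k$ then ``the same relation forces $q_{k-1}^{s-1}-q_k^{s-1}\geq c/k$.'' Written exactly, the relation is $2q_k+kG(q_k)=(k+1)G(q_{k-1})$ with $G(q)=\prob{\bin{r,q}\geq s}=\binom{r}{s}q^s+O(q^{s+1})$, and the $O(q^{s+1})$ corrections are multiplied by $k$: relative to the driving term $2q_k$ they are of order $kq_k^{s}$, and nothing in your hypotheses makes $kq_k^s$ small. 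Your contradiction hypothesis is a \emph{lower} bound on $q_k$, and $q_k\to0$ alone still permits, a priori, decay as slow as $q_k\asymp k^{-1/(2s)}$, in which case the error $kq_k^{s+1}$ swamps $q_k$ and no inequality of the form $q_{k-1}^{s-1}-q_k^{s-1}\geq c/k$ can be extracted. This is exactly the difficulty the paper's Lemma \ref{quadratic} is engineered to avoid: it compares $k\prob{\bin{r,y}\geq s}$ with $(k+1)\prob{\bin{r,q_k}\geq s}$ at $y=\left(\tfrac{k-1}{k}\right)^2q_k$, so that in the polynomial expansion the factor-$k$ parts cancel term by term and every coefficient is bounded by $2i+1$ independently of $k$; this yields $q_{k+1}<\left(\tfrac{k-1}{k}\right)^2q_k$ once $q_k$ is below a constant, hence $q_k=O(k^{-2})$, which supplies the starting index. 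Your sketch is missing this cancellation (or some other a priori upper bound on $q_k$), so the dichotomy does not close.

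For completeness: your first part (monotone convergence of $p_k$ to $p_*$, via the telescoping sum of $B_{r,s}(p_k)-B_{r,s}(p_{k-1})$ against the harmonic series) is correct and essentially the paper's argument. Your last part is a genuine improvement: for $r=2$, $s=1$, the exact identity $d_k\left(1-q_k-\tfrac{d_k}{2}\right)=\tfrac{q_k^2}{2(k+1)}$ and the resulting $\tfrac{1}{q_k}-\tfrac{1}{q_{k-1}}=\tfrac{1+o(1)}{2k}$ give $(1-p_k)\sim 2/\log k$ by direct summation, which is considerably shorter and more transparent than the paper's two-stage contradiction argument (Lemma \ref{21a} and Theorem \ref{21b}).
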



For meek choice, Theorem \ref{main} shows a dichotomy between double exponential decay for $r<r(s)$ and condensation-like behaviour for $r\geq r(s)$.  Calculations show that $r(2)=7$, so the simplest case where the conjecture of \cite{KR2013} fails is $r=7, s=2$.

\section{Convergence of the degree distribution}

In this section we prove that the sequence $(p_k)_{k\geq 0}$ is well-defined, and that $F_m(k)/2m\to p_k$ with probability 1.

\begin{proof}[Proof of Lemma \ref{pk}]
Note that $B_{r,s}(x)$ is continuous and strictly increasing in $x$, and
\begin{align*}
\deriv{}{x}B_{r,s}(x)=&\deriv{}{x}\sum_{i=0}^{s-1}\binom{r}{i}x^{r-i}(1-x)^i \\
=&\sum_{i=0}^{s-1}(r-i)\binom{r}{i}x^{r-i-1}(1-x)^i-\sum_{i=1}^{s-1}i\binom{r}{i}x^{r-i}(1-x)^{i-1} \\
=&\sum_{i=0}^{s-1}r\binom{r-1}{i}x^{r-i-1}(1-x)^i-\sum_{i=1}^{s-1}r\binom{r-1}{i-1}x^{r-i}(1-x)^{i-1} \\
=& r\binom{r-1}{s-1}x^{r-s}(1-x)^{s-1} \,,
\end{align*}
since all the other terms cancel. So
\[
\pderiv{}{x}f_k(x,p)=-kr\binom{r-1}{s-1}x^{r-s}(1-x)^{s-1}-2<0\,,
\]
and, for $p\in [0,1)$,
\begin{align*}
f_k(0,p)&=(k+1)B_{r,s}(p)+1\geq 1 \\
f_k(1,p)&=(k+1)(B_{r,s}(p)-1)< 0\,,
\end{align*}
so provided $p\in[0,1)$ there is a unique $x\in(0,1)$ with $f_k(x,p)=0$.
\end{proof}

To prove convergence, we will use the characterisation of the process given in Lemma \ref{markov}.

\begin{proof}[Proof of Lemma \ref{markov}]For $k\geq 1$, if the new vertex added at time $m+1$ chooses a vertex of degree exceeding $k$ to link to, then $F_{m+1}(k)=F_m(k)+1$, since the sum increases by 1 for the new vertex added, and the old vertex linked to was not included in the sum. If it chooses a vertex of degree less than $k$, then $F_{m+1}(k)=F_m(k)+2$, since the sum increases by one for each of the two vertices. If it chooses a vertex of degree exactly $k$, then that vertex will no longer be included in the sum at time $m+1$, but the new vertex still adds 1, so $F_{m+1}(k)=F_m(k)+1-k$. Since each element of the selected $r$-tuple has degree at most $j$ with probability $F_m(j)/2m$, independently of the others, and the vertex linked to has degree at most $j$ if and only if at least $r-s$ of the elements of the selected $r$-tuple do, the probability that the new vertex links to a vertex of degree at most $j$ is $B_{r,s}(F_m(j)/2m)$, and the result follows.
\end{proof}

\begin{proof}[Proof of Theorem \ref{converge}]
We prove the result by induction on $k$; it is trivial for $k=0$. For $k>0$, we will assume $\varepsilon$ is sufficiently small that $(p_k-\varepsilon,p_k+\varepsilon)\subset(0,1)$.

Write $\beta=\sqrt{\alpha}$. By the induction hypothesis, for any $\delta>0$ the probability that there exists $m'\geqslant m^\beta$ with $|F_{m'}(k-1)/2m'-p_{k-1}|>\delta$ decays exponentially in $(m^\beta)^\beta=m^\alpha$. It is sufficient to prove that the probability that there is some $m''\geqslant m$ with $|F_{m''}(k)/2m''-p_k|>\varepsilon$ but $|F_{m'}(k-1)/2m'-p_{k-1}|<\delta$ for all $m'\geqslant m^\beta$ decays at least exponentially in $m^\alpha$ (for some suitably chosen $\delta$).

Suppose that $F_m(k)/2m<p_k-\varepsilon/3$ and $F_m(k-1)/2m>p_{k-1}-\delta$. Then
\begin{align*}
\mathbb{P}(F_{m+1}(k)-F_m(k)=1)&>1-B_{r,s}(p_k-\varepsilon/3) \\
&>1-B_{r,s}(p_k)
\end{align*}
and
\[
\mathbb{P}(F_{m+1}(k)-F_m(k)=2)>B_{r,s}(p_{k-1}-\delta)\,.
\]
So $F_{m+1}(k)-F_m(k)$ dominates the random variable
\[
X^-=
\begin{cases}
1 &\text{with prob }\quad 1-B_{r,s}(p_k-\varepsilon/3)\,; \\
1-k &\text{with prob }\quad B_{r,s}(p_k-\varepsilon/3)-B_{r,s}(p_{k-1}-\delta)\,; \\
2 &\text{with prob }\quad B_{r,s}(p_{k-1}-\delta)\,.
\end{cases}
\]
Similarly, if $F_m(k)/2m>p_k+\varepsilon/3$ and $F_m(k-1)/2m<p_{k-1}+\delta$ then $F_{m+1}(k)-F_m(k)$ is dominated by the random variable
\[
X^+=
\begin{cases}
1 &\text{with prob }\quad 1-B_{r,s}(p_k+\varepsilon/3)\,; \\
1-k &\text{with prob }\quad B_{r,s}(p_k+\varepsilon/3)-B_{r,s}(p_{k-1}+\delta)\,; \\
2 &\text{with prob }\quad B_{r,s}(p_{k-1}+\delta)\,.
\end{cases}
\]
Note that setting $\varepsilon$ and $\delta$ equal to zero in the above gives a variable with expectation $1+(k+1)B_{r,s}(p_{k-1})-kB_{r,s}(p_k)=2p_k$. Choose $\delta$ sufficiently small that $\mathbb{E}(X^+)<2p_k<\mathbb{E}(X^-)$.

Run the process to time $m^\beta$ and suppose that $F_{m^\beta}(k)/2m^\beta<p_k-\varepsilon/3$. Now the process from time $m^\beta$ until $F_t(k)/2t$ first exceeds $p_k-\varepsilon/3$ is dominated by the process $\tilde{F}_t$ where $\tilde{F}_{m^\beta}=F_{m^\beta}(k)$ and $\tilde{F}_{t+1}-\tilde{F}_t$ are iid $X^-$ variables (provided $|F_t(k-1)/2t-p_{k-1}|<\delta$ in this region). Since $p_k, \tilde{F}_{m^\beta}\in[0,1]$, $\tilde{F}_m$ will exceed $p_k-\varepsilon/3$ provided that the average of $m-m^\beta$ iid $X^-$ variables is at least $2p_k-\varepsilon/3$ for $m$ sufficiently large. The probability that this fails is exponential in $m-m^\beta$ by Hoeffding's inequality. So with suitably high probability $F_t(k)/2t\in(p_k-\varepsilon/3,p_k+\varepsilon/3)$ for some $t\in[m^\beta,m]$. This also holds similarly if $F_{m^\beta}(k)/2m^\beta>p_k+\varepsilon/3$ (and trivially if neither is true).

Now suppose $t>m^\beta$ with $F_t(k)/2t\in(p_k-\varepsilon/3,p_k+\varepsilon/3)$. We will show that with suitably high probability $F_t(k)/2t\in(p_k-\varepsilon,p_k+\varepsilon)$ for all $s>t$. Let $s_1$ be the first time after $t$ with $F_{s_1}(k)/2s_1\notin(p_k-2\varepsilon/3,p_k+2\varepsilon/3)$, and assume wlog that it is below. Let $t_1$ be the next time that $F_{t_1}(k)/2t_1\notin(p_k-\varepsilon,p_k-\varepsilon/3)$. We can again bound $F_s(k)$ from below for $s_1\leqslant s \leqslant t_1$ by a process $\tilde{F}_s$ with $\tilde{F}_{s_1}=F_{s_1}(k)$ and $\tilde{F}_{s+1}-\tilde{F}_s$ iid $X^-$ variables. In order for $\tilde{F}_s/2s$ to reach $p_k-\varepsilon$, the sum of $s-s_1$ iid $X^-$ variables must be at most $(s-s_1)(2p_k-2\varepsilon)-2s_1\varepsilon/3$, that is the average must be at least $2\varepsilon-2s_1\varepsilon/3(s-s_1)$ below its expectation. Since $|X^--\mathbb{E}(X^-)|$ is bounded, this is impossible unless $s-s_1>cs_1$ for some constant $c$, and beyond that point its probability is at most exponential in $s-s_1$ by Hoeffding's inequality. Consequently the sum over all $s$ with $s-s_1>cs_1$ is exponential in $s_1$, and therefore the probability that $F_{t_1}(k)/2t_1<p_k-\varepsilon$ is exponential in $s_1$. Similarly, defining $s_2$ to be the first time after $t_1$ such that $F_{s_2}(k)/2s_2\notin (p_k-2\varepsilon/3,p_k+2\varepsilon/3)$, and $t_2$ to be the first time after $s_2$ we are outside $(p_k-\varepsilon,p_k-\varepsilon/3)\cup(p_k+\varepsilon/3,p_k+\varepsilon)$, the probability that we reach $p_k\pm\varepsilon$ by time $t_2$ is exponential in $s_2$. The probability that we ever reach $p_k\pm\varepsilon$ is therefore at most $\sum_{s>m^\beta}q_s$ where $q_s$ decays exponentially in $s$, so the sum decays exponentially in $m^\beta$.
\end{proof}

\section{The limit of the $p_k$}

In this section we show that $p_k\to p_*$ as $k\to\infty$ and use this to show the dichotomy between double exponential decay and condensation-like behaviour for meek choice.

Write $f(p)$ for $f_k(p,p)$; since
\[
f_k(p,p)=B_{r,s}(p)-2p+1\,,
\]
this is independent of $k$. We have $f(0)=1$ and $f(1)=0$; let $p_*$ be the smallest positive root of $f(p)=0$. Since $f(p)$ is continuous, $f(p)>0$ for $p\in [0,p_*)$.
\begin{lem}
As $k\to\infty$, $p_k$ approaches $p_*$ from below.
\end{lem}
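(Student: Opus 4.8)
The plan is to prove the two halves of the statement separately: first that $(p_k)$ is strictly increasing and bounded above by $p_*$, and then that its limit is exactly $p_*$ rather than some strictly smaller value.

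For the monotonicity and bound I would argue by induction that $p_{k-1}<p_k<p_*$, the base case being $p_0=0<p_*$ (note $p_*>0$ since $f(0)=1>0$ and $f$ is continuous). Assume $p_{k-1}<p_*$. Since $\partial_x f_k(x,p_{k-1})<0$, as computed in the proof of Lemma~\ref{pk}, the map $x\mapsto f_k(x,p_{k-1})$ is strictly decreasing, hence positive to the left of its unique root $p_k$ and negative to the right. Evaluating at $x=p_{k-1}$ gives $f_k(p_{k-1},p_{k-1})=f(p_{k-1})>0$, because $p_{k-1}<p_*$ and $f>0$ on $[0,p_*)$; this forces $p_{k-1}<p_k$. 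Evaluating at $x=p_*$, and using $f(p_*)=0$ to substitute $1-2p_*=-B_{r,s}(p_*)$, yields
\[
f_k(p_*,p_{k-1})=(k+1)\bigl(B_{r,s}(p_{k-1})-B_{r,s}(p_*)\bigr)<0,
\]
since $B_{r,s}$ is strictly increasing and $p_{k-1}<p_*$; this forces $p_k<p_*$. The induction then shows $(p_k)$ increases to some limit $L\leq p_*$.

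To identify $L$, I would rewrite the defining equation $f_k(p_k,p_{k-1})=0$ as
\[
k\bigl(B_{r,s}(p_k)-B_{r,s}(p_{k-1})\bigr)=B_{r,s}(p_{k-1})-2p_k+1.
\]
As $k\to\infty$ the right-hand side converges to $B_{r,s}(L)-2L+1=f(L)$ by continuity, so $B_{r,s}(p_k)-B_{r,s}(p_{k-1})=(f(L)+o(1))/k$.

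The crux is to rule out $L<p_*$. If $L<p_*$ then $f(L)>0$, so for all large $k$ the increment $B_{r,s}(p_k)-B_{r,s}(p_{k-1})$ is at least $f(L)/(2k)$. Summing these increments telescopes to $B_{r,s}(p_N)-B_{r,s}(p_{k_0})$, which stays bounded above by $B_{r,s}(L)<\infty$; yet the lower bounds sum to a divergent harmonic tail $\sum f(L)/(2k)$, a contradiction. Hence $f(L)=0$, and since $L\leq p_*$ while $p_*$ is the \emph{smallest} positive root of $f$, we conclude $L=p_*$. I expect this harmonic-divergence step to be the main obstacle, as it is precisely what forces the convergence all the way up to $p_*$; by contrast the monotonicity argument is a routine consequence of the sign behaviour of $f_k$ established in Lemma~\ref{pk}.
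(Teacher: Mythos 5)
Your proposal is correct, and its overall skeleton matches the paper's: prove by induction that $p_{k-1}<p_k<p_*$, deduce convergence to some $L\leq p_*$, and rule out $L<p_*$ by a harmonic-series divergence argument. The differences lie in the mechanisms. For the upper bound $p_k<p_*$, the paper uses monotonicity of $f_k$ in \emph{both} arguments (it shows $\partial f_k/\partial p>0$ and argues $0=f_k(p_k,p_{k-1})\leq f_k(p_*,p_{k-1})<f_k(p_*,p_*)=0$), whereas you substitute $1-2p_*=-B_{r,s}(p_*)$ to get the explicit identity $f_k(p_*,p_{k-1})=(k+1)\bigl(B_{r,s}(p_{k-1})-B_{r,s}(p_*)\bigr)<0$; both are valid, and yours avoids introducing the $p$-derivative. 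The more substantive divergence is in the final step. The paper bounds $\bigl|\partial f_k/\partial x\bigr|\leq ck+2$ and converts the uniform gap $f(p_k)\geq\varepsilon$ into $p_k-p_{k-1}>\varepsilon/(ck+2)$, contradicting $p_k\leq 1$; the telescoping quantity is $p_k$ itself, and the argument reuses the derivative computed in Lemma~\ref{pk}. You instead rearrange the defining equation into $k\bigl(B_{r,s}(p_k)-B_{r,s}(p_{k-1})\bigr)=B_{r,s}(p_{k-1})-2p_k+1\to f(L)$, so the telescoping quantity is $B_{r,s}(p_k)$, bounded by $1$, while the increments are $\geq f(L)/(2k)$. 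Your version is purely algebraic (no Lipschitz bound needed) and arguably cleaner, since the $\Omega(1/k)$ rate falls straight out of the defining equation rather than out of a derivative estimate; the paper's version has the mild advantage of recycling a computation it already needed elsewhere. One pedantic point you glossed over: to conclude $L=p_*$ from $f(L)=0$ you should note $L\geq p_1>0$ (so that $L$ is a \emph{positive} root), though in fact your contradiction argument already gives $L=p_*$ directly without invoking minimality of $p_*$.
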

\begin{proof}
First, we claim that $p_k<p_*$ for every $k$. We prove this by induction on $k$; it is true for $k=0$. Note that, for $p\in(0,1)$,
\[
\pderiv{}{p}f_k(x,p)=(k+1)r\binom{r-1}{s-1}p^{r-s}(1-p)^{s-1}>0\,,
\]
so $f_k(x,p)$ is strictly increasing with $p$; we showed in the proof of Lemma \ref{pk} that it is strictly decreasing in $x$. Suppose $p_{k-1}<p_*\leq p_k$. Then
\begin{align*}
0=f_k(p_k,p_{k-1})&\leq f_k(p_*,p_{k-1}) \\
&<f_k(p_*,p_*)=0\,,
\end{align*}
a contradiction. So $p_k<p_*$ for all $k$. It follows that $p_k>p_{k-1}$ for every $k$, since otherwise, as $p_{k-1}<p_*$,
\begin{align*}
f_k(p_k,p_{k-1})\geq f(p_{k-1})>0\,.
\end{align*}

So $p_k\to p_\infty$ for some $p_\infty\in[0,p_*]$. Suppose $p_\infty\neq p_*$; then $p_k\in[0,p_\infty]$ and $f(p)$ attains some minimum $\varepsilon>0$ in this region. So $f(p_k)\geq\varepsilon$ for all $k$. We may bound
\[
\pderiv{}{x}f_k(x,p_{k-1})=-kr\binom{r-1}{s-1}x^{r-s}(1-x)^{s-1}-2
\]
as being greater than $-(ck+2)$, where $c$ is some constant which depends only on $r$ and $s$. Consequently $p_k-p_{k-1}>\varepsilon/(ck+2)$. But then $p_k>\sum_{i=1}^k\varepsilon/(ci+2)$, which is impossible for $k$ sufficiently large.
\end{proof}

We therefore get different behaviour depending on whether $p_*=1$ or $p_*<1$. For $s=1$, $p_*<1$ if $r\geq 3$; for $s=2$, $p_*<1$ if $r\geq 7$; and for $s=3$, $p_*<1$ if $r\geq 10$. Clearly $p_*$ is decreasing in $r$ for fixed $s$, as
\[
f(p)=\prob{\bin{r,1-p}<s}-2p+1\,,
\]
which is decreasing with $r$ for fixed $s$ and $p$.

Next we address the question of how large $r$ needs to be in terms of $s$ to see the degenerate behaviour. Write $r(s)$ for the smallest $r$ for which $p_*<1$; recall that $p_*=1$ if and only if the function $f(p)=B_{r,s}(p)-2p+1$ is strictly positive on $[0,1)$; since trivially $f(p)>0$ for $p\in(0,1/2)$ we shall only bound $f(p)$ for $p\in[1/2,1]$.

\begin{lem}For each $s\geq 1$, $r(s)/s\geq 2$.
\end{lem}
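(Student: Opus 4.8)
The plan is to prove the slightly stronger statement that $f(p)=B_{r,s}(p)-2p+1>0$ on $[0,1)$ whenever $r\le 2s$. Since $p_*=1$ holds exactly when $f$ is strictly positive on $[0,1)$, establishing this for all $r\le 2s$ shows that the smallest $r$ with $p_*<1$ satisfies $r(s)\ge 2s+1$, and in particular $r(s)/s\ge 2$. The excerpt already records that $f$ is decreasing in $r$ for fixed $s$ and $p$, so it would suffice to treat the single worst case $r=2s$; but the estimate below works uniformly for all $r\le 2s$, so I would simply run it in that generality. As noted in the excerpt, positivity is trivial for $p<1/2$, since there $B_{r,s}(p)\ge 0$ and $1-2p>0$, so $f(p)=B_{r,s}(p)+(1-2p)>0$; the work is in the range $p\in[1/2,1)$.

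The key idea is to rewrite $f$ via the identity $f(p)=\prob{\bin{r,1-p}<s}-2p+1$ and then apply Markov's inequality. Writing $q=1-p\in(0,1)$, we have
\[
f(p)=\prob{\bin{r,q}<s}-1+2q=2q-\prob{\bin{r,q}\ge s}.
\]
Since $\mean{\bin{r,q}}=rq$, Markov's inequality gives $\prob{\bin{r,q}\ge s}\le rq/s$, and hence
\[
f(p)\ge 2q-\frac{rq}{s}=\frac{q(2s-r)}{s}\ge 0
\]
for every $r\le 2s$ and every $q\in(0,1)$. Thus the whole lemma comes down to a single application of Markov's inequality together with the binomial-tail form of $B_{r,s}$.

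The only delicate point, which I would regard as the main obstacle, is upgrading $\ge 0$ to the strict inequality $>0$ at the boundary $r=2s$, where the chain above degenerates to equality. For $r<2s$ there is nothing to do, since $q(2s-r)/s>0$ for $q>0$. For $r=2s$ I would argue that Markov's inequality is itself strict here: writing $\prob{\bin{2s,q}\ge s}=\sum_{j\ge s}\prob{\bin{2s,q}=j}\le\tfrac1s\sum_{j\ge s}j\,\prob{\bin{2s,q}=j}\le\tfrac1s\mean{\bin{2s,q}}=2q$, the first inequality is strict because the $j=2s$ term has weight $\tfrac{2s}{s}q^{2s}=2q^{2s}>q^{2s}$, i.e. $\bin{2s,q}$ places positive mass on a value strictly exceeding $s$. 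Hence $f(p)>0$ on $(0,1)$ for $r=2s$ as well, which yields $r(s)\ge 2s+1$ and therefore $r(s)/s\ge 2$.
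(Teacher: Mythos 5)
Your proof is correct, but it takes a genuinely different route from the paper's. The paper reduces (via the monotonicity of $f$ in $r$ recorded just before the lemma) to the single case $r=2s-1$, and then uses a reflection argument special to an odd number of trials: writing each term of $B_{2s-1,s}(p)$ as the corresponding term of $B_{2s-1,s}(1-p)$ times $\left(\frac{p}{1-p}\right)^{2(i-s)+1}$, it deduces $\frac{B_{2s-1,s}(p)}{1-B_{2s-1,s}(p)}\geq\frac{p}{1-p}$ for $p\geq 1/2$, hence $B_{2s-1,s}(p)\geq p$ and $f(p)\geq 1-p>0$. You instead write $f(p)=2q-\prob{\bin{r,q}\geq s}$ with $q=1-p$ (using the identity $B_{r,s}(p)=\prob{\bin{r,1-p}<s}$, which the paper itself records) and apply Markov's inequality $\prob{\bin{r,q}\geq s}\leq rq/s$; your handling of the delicate boundary case $r=2s$ is also sound, since Markov is indeed strict there because the atom at $j=2s>s$ has positive mass $q^{2s}$. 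Comparing the two: your first-moment argument is more elementary, works uniformly over all $r\leq 2s$ without needing monotonicity in $r$, and proves the strictly stronger conclusion $r(s)\geq 2s+1$ (consistent with the paper's computed values $r(1)=3$, $r(2)=7$, $r(3)=10$), whereas the paper's symmetry argument only covers $r\leq 2s-1$, i.e.\ $r(s)\geq 2s$, which is exactly what the lemma asserts. What the paper's method buys in exchange is a stronger pointwise lower bound, $f(p)\geq 1-p$ at $r=2s-1$, compared with your bound $q(2s-r)/s$, which degenerates at $r=2s$ to strict-but-unquantified positivity; neither that extra pointwise strength nor your extra $+1$ is used elsewhere in the paper, so both proofs serve equally well.
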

\begin{proof}It is sufficient to prove that $f(p)$ is positive on $[0,1)$ when $r=2s-1$. We claim that $B_{2s-1,s}(p)\geq p$ for $p\geq 1/2$. To prove this claim, note that
\begin{align*}
B_{2s-1,s}(p)&=\sum_{i=s}^{2s-1}\binom{2s-1}{i}p^i(1-p)^{2s-1-i} \\
&=\sum_{i=s}^{2s-1}\binom{2s-1}{i}p^{2s-1-i}(1-p)^{i}\left(\frac{p}{1-p}\right)^{2(i-s)+1} \\
&\geq\frac{p}{1-p}\sum_{i=s}^{2s-1}\binom{2s-1}{i}p^{2s-1-i}(1-p)^{i} \\
&=\frac{p}{1-p}B_{2s-1,s}(1-p)\,,
\end{align*}
so
\[
\frac{B_{2s-1,s}(p)}{1-B_{2s-1,s}(p)}\geq\frac{p}{1-p}\,.
\]
Consequently $f(p)\geq 1-p>0$ for $p\in[1/2,1)$.
\end{proof}

\begin{thm}As $s\to\infty$, $r(s)/s\to 2$.
\end{thm}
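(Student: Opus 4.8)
The previous lemma gives $r(s)/s\geq 2$, so it remains only to prove $\limsup_{s\to\infty}r(s)/s\leq 2$. Fix any $c>2$; I will show that for all sufficiently large $s$, taking $r=\lceil cs\rceil$ forces $p_*<1$. Since $r(s)$ is by definition the smallest $r$ with $p_*<1$, this gives $r(s)\leq\lceil cs\rceil$, and hence $r(s)/s\leq\lceil cs\rceil/s\to c$; letting $c\downarrow 2$ then yields the claim. Recall that $p_*<1$ holds as soon as $f(p)=B_{r,s}(p)-2p+1$ is negative at some $p\in[1/2,1)$, because $f(0)=1>0$ and the intermediate value theorem then supplies a root below that point. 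Thus it suffices to exhibit one test point at which $f<0$.

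The idea is to pick $p$ just below $1-1/c$, where $2p-1$ stays bounded away from $0$ while $B_{r,s}(p)=\prob{\bin{r,1-p}<s}$ becomes exponentially small in $s$. Concretely I would set $1-p=a/c$ with $a=\tfrac12+\tfrac{c}{4}$, the midpoint of the interval $(1,c/2)$, so that $a\in(1,c/2)$ exactly when $c>2$. Then $p=\tfrac34-\tfrac{1}{2c}>\tfrac12$ and $2p-1=\tfrac12-\tfrac1c$ is a fixed positive constant independent of $s$. For this $p$ the variable $\bin{r,1-p}$ has mean $r(1-p)\geq cs\cdot(a/c)=as$, and since $a>1$ the threshold $s-1$ sits a constant factor below the mean. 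A standard Chernoff bound for the lower tail of a binomial therefore gives $B_{r,s}(p)=\prob{\bin{r,1-p}\leq s-1}\leq\exp(-\delta^2 as/2)$ with $\delta=1-1/a>0$, so $B_{r,s}(p)\to 0$. Hence for all large $s$ we have $B_{r,s}(p)<\tfrac12-\tfrac1c=2p-1$, i.e. $f(p)<0$, as required.

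The only genuine estimate is the binomial tail bound, and this is routine precisely because the competing quantity $2p-1$ is a positive constant: we need nothing sharper than $B_{r,s}(p)\to 0$, so any form of the lower-tail Chernoff inequality works. The real content is instead the bookkeeping that pins down the constant $2$: we simultaneously need $p>\tfrac12$ (so that $2p-1>0$) and $r(1-p)>s$ (so that $s-1$ lies below the mean and the tail decays), which requires $p\in(\tfrac12,1-1/c)$, and this window is nonempty exactly when $c>2$. That is the whole reason the threshold $r(s)/s$ converges to $2$ rather than to some larger value.
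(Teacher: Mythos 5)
Your proof is correct and follows essentially the same route as the paper: both arguments fix a test point $p$ in the window $\left(\tfrac12,\,1-\tfrac{s}{r}\right)$ (nonempty exactly when $r/s>2$), use a binomial lower-tail concentration bound (your multiplicative Chernoff bound versus the paper's Hoeffding inequality) to show $B_{r,s}(p)\to 0$ while $2p-1$ stays a positive constant, and conclude $f(p)<0$, hence $p_*<1$ and $r(s)\leq r$. The differences (your explicit choice $1-p=a/c$ with $a$ the midpoint of $(1,c/2)$, versus the paper leaving $p$ free in $\left(\tfrac12,\tfrac{1+\eps}{2+\eps}\right)$) are cosmetic.
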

\begin{proof}
We show that for any $\eps>0$, $r(s)<(2+2\eps)s$ for $s$ sufficiently large. Suppose $s>1/\eps$, and let $r$ be an integer between $(2+\eps)s$ and $(2+2\eps)s$. Then Hoeffding's inequality gives
\[
B_{r,s}(p)\leq\exp(-2(r(1-p)-s)^2/r)\,.
\]
Now $(r(1-p)-s)>s((2+\eps)(1-p)-1)$, which is positive provided $p<\frac{1+\eps}{2+\eps}$, and in that case $(r(1-p)-s)^2>s^2((2+\eps)(1-p)-1)^2$. Fix $p\in(\frac{1}{2},\frac{1+\eps}{2+\eps})$, then
\[
B_{r,s}(p)<\exp\left(-s((2+\eps)(1-p)-1)^2/(1+\eps)\right)\,,
\]
so there exists $\delta>0$ with $f(p)<\ee^{-\delta s}-2p+1$. Since $-2p+1<0$, it follows that $f(p)<0$ for $s$ sufficiently large.
\end{proof}

\begin{thm}As $s\to\infty$, $s^{-1/2}(r(s)-2s)\to\infty$.
\end{thm}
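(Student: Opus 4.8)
The plan is to show that for every fixed constant $C>0$ one has $r(s)>2s+C\sqrt s$ once $s$ is large, since this gives $s^{-1/2}(r(s)-2s)\to\infty$. So I fix $C$, set $r=\lceil 2s+C\sqrt s\rceil$, and aim to prove $f(p)=B_{r,s}(p)-2p+1>0$ on $[0,1)$; since $p_*$ is decreasing in $r$ and $r(s)$ is by definition the least $r$ with $p_*<1$, this is equivalent to $r<r(s)$. As in the preceding results it is enough to treat $p\in[1/2,1)$, and writing $q=1-p$ and using $B_{r,s}(p)=\prob{\bin{r,q}<s}$ the claim becomes $g(q):=\prob{\bin{r,q}\ge s}<2q$ for all $q\in(0,1/2]$. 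I would prove this by splitting $(0,1/2]$ into three ranges.

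The first is a window $[1/2-\eta,1/2]$ around the endpoint, for a small constant $\eta=\eta(C)$ to be fixed. Because $g'(q)=r\binom{r-1}{s-1}q^{s-1}(1-q)^{r-s}>0$, the function $g$ is increasing, so on this window it suffices to bound $g(1/2)$. Here $s$ sits $(C/\sqrt2+o(1))$ standard deviations below the mean $r/2$ of $\bin{r,1/2}$, so by the Berry--Esseen theorem $g(1/2)=\prob{\bin{r,1/2}\ge s}=\Phi(C/\sqrt2)+O(s^{-1/2})$, where $\Phi$ is the standard normal distribution function. Since $\Phi(C/\sqrt2)<1$, choosing $\eta<\tfrac12(1-\Phi(C/\sqrt2))$ yields $g(q)\le g(1/2)<1-2\eta\le2q$ for every $q$ in the window and all large $s$. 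This step is the \emph{crux}: it requires a genuinely two-sided estimate lower-bounding the lower tail $\prob{\bin{r,1/2}<s}$ by a positive constant, which Chernoff or Hoeffding bounds cannot supply, as they control only the tail on the far side of the mean. One could replace Berry--Esseen by an elementary local estimate, summing the $\Theta(\sqrt s)$ near-central terms each of order $s^{-1/2}$, but the normal approximation states it most cleanly, and the $\sqrt s$ threshold in the theorem is precisely this central-limit scaling.

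For the remaining ranges I use only upper bounds. For $q\in[1/5,1/2-\eta]$ the mean $rq$ lies below $s$ by $\Omega(s)$, so Hoeffding's inequality gives $g(q)\le\exp(-2(s-rq)^2/r)\le\exp(-\Omega(s))$, far below $2q\ge2/5$. For the small values $q\in(0,1/5]$ I bound $g(q)$ by its leading term: the ratio of consecutive binomial terms is at most $\tfrac{r-s}{s+1}\cdot\tfrac{q}{1-q}<\tfrac12$ for $q\le1/5$ and $s$ large, so $g(q)\le2\binom{r}{s}q^s$; then $g(q)<2q$ reduces to $\binom{r}{s}q^{s-1}<1$, which holds because $\binom{r}{s}\le2^r=4^s2^{C\sqrt s}$ while $q^{s-1}\le5^{-(s-1)}$, and $5(4/5)^s2^{C\sqrt s}\to0$. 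These three ranges cover $(0,1/2]$, giving $g(q)<2q$ throughout, hence $f>0$ on $[0,1)$ and $r<r(s)$, so that $r(s)>2s+C\sqrt s$ as required.

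The only real obstacle is the window around $q=1/2$, and I expect the whole difficulty of the theorem to be concentrated there: it is exactly the regime in which $g(q)$ is a nondegenerate constant rather than exponentially small, so the crude Hoeffding argument used to prove $r(s)=2s+o(s)$ is no longer available, and one must instead quantify how far $g(1/2)$ falls short of $1$. The choices of the constant $\eta$ and of the cut-offs $1/5$ and $1/2-\eta$ are flexible; any constant cut-off strictly below $1/4$ works for the small-$q$ range.
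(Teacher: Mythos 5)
Your proof is correct and follows essentially the same strategy as the paper: both split $[1/2,1)$ into three ranges, with the crux being a central-limit/normal-approximation estimate at $p=1/2$ showing the binomial probability falls short of critical by a constant (the paper writes this as $f(1/2)>\tfrac12\Phi(-c/\sqrt2)$ and extends it over a constant-width window via $f'\geq-2$, where you use monotonicity of $g$ instead), a Hoeffding bound in the middle range, and an elementary argument near $p=1$. The only technical divergence is that last range, where the paper shows $f'(p)<-1$ for $p>8/9$ and uses $f(1)=0$, while you bound the upper tail by twice its leading term $\binom{r}{s}q^s$; both work, and neither changes the architecture of the argument.
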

\begin{proof}
Fix $c>0$ and take $s$ sufficiently large and $r$ such that $2s<r<2s+cs^{1/2}$. Now
\[
f(1/2)=B_{r,s}(1/2)>\tfrac{1}{2}\Phi\Big(\tfrac{-c}{\sqrt{2}}\Big) \,.
\]
Write $\rho=1/2+\Phi(-c/\sqrt{2})/4$; since $f'(p)\geq -2$, $f(p)>0$ for all $p\in[1/2,\rho)$. Also, if $s>c^2$, $r<2s+cs^{1/2}<3s$ and so
\begin{align*}
f'(p)&=r\binom{r-1}{s-1}p^{r-1}(1-p)^{s-1}-2 \\
&<r2^{r-1}(1-p)^{s-1}-2 \\
&<3s2^{3s}(1-p)^{s-1}-2 \\
&<(9-9p)^{s-1}-2<-1 \,,
\end{align*}
if $p>8/9$ and $s$ sufficiently large. Since $f(1)=0$, $f(p)>1-p>0$ for all $p\in(8/9,1)$.

Finally, if $s$ is sufficiently large then $(2s+cs^{1/2})\rho>s+cs^{1/2}+s^{3/4}$, so for any $p\in[\rho,8/9]$,
\[
\prob{\bin{r,p}\leq r-s}\leq\exp(-2s^{3/2}/r)\,,
\]
again by Hoeffding's inequality, and
\[
f(p)\geq 2/9-\exp(-2s^{1/2}/3)\,,
\]
which is positive for $s\geq 6$. So $f(p)>0$ in each of the intervals $[1/2,\rho)$, $[\rho,8/9]$ and $(8/9,1)$, and consequently $r(s)\geq 2s+cs^{1/2}$, for all sufficiently large $s$.
\end{proof}

\section{Meek choice with double exponential decay}

In this section we assume $r\geq s\geq 2$, and show double exponential decay of the limiting distribution in the case where $p_*=1$.  Write $q_k$ for $1-p_k$.
\begin{lem}\label{recurrence}
If $p_k\to 1$, then for $k$ sufficiently large $q_k$ satisfies
\[
q_k<\frac{k+1}{2}\binom{r}{s}q_{k-1}^s\,.
\]
\end{lem}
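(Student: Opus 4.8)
The plan is to start from the defining equation $f_k(p_k,p_{k-1})=0$, which unpacks to
\[
(k+1)B_{r,s}(p_{k-1})-kB_{r,s}(p_k)-2p_k+1=0\,,
\]
and rewrite everything in terms of the tail quantities $q_k=1-p_k$. The natural first step is to substitute $p_k=1-q_k$ and $p_{k-1}=1-q_{k-1}$ and rearrange to isolate $q_k$. Since $-2p_k+1=2q_k-1$, collecting terms should give an expression of the shape $2q_k = k\bigl(B_{r,s}(p_k)-B_{r,s}(p_{k-1})\bigr)+\bigl(1-B_{r,s}(p_{k-1})\bigr)$, and I would then bound each piece from above.

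The key estimate is a good upper bound on $1-B_{r,s}(p)$ for $p$ close to $1$. By definition $1-B_{r,s}(p)$ is the probability that a $\bin{r,p}$ variable is at most $r-s$, i.e.\ that at least $s$ of the $r$ trials fail; writing $q=1-p$ this is $\prob{\bin{r,q}\geq s}$. For small $q$ the dominant contribution is the event of exactly $s$ failures, so $\prob{\bin{r,q}\geq s}\leq\binom{r}{s}q^s(1+o(1))$, and more crudely one can bound it by $\binom{r}{s}q^s$ times a factor that tends to $1$ as $q\to 0$. This is where the hypothesis $p_k\to 1$ (equivalently $q_k\to 0$) is used: it guarantees $q_{k-1}$ is small enough that the higher-order binomial terms are absorbed, giving $1-B_{r,s}(p_{k-1})\leq(1+o(1))\binom{r}{s}q_{k-1}^s$. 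I also need to control the difference $B_{r,s}(p_k)-B_{r,s}(p_{k-1})$; since $B_{r,s}$ is increasing and $p_k>p_{k-1}$ this difference is positive, but it must be shown not to dominate. I expect that either one bounds it directly via the mean value theorem (using the derivative formula $B_{r,s}'(x)=r\binom{r-1}{s-1}x^{r-s}(1-x)^{s-1}$ established in the proof of Lemma \ref{pk}, which is itself $O(q_{k-1}^{s-1})$ near $1$), or one simply drops it after noting it is of smaller or comparable order and folded into the constant.

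Assembling these bounds, the $k$-term and the constant term both end up proportional to $\binom{r}{s}q_{k-1}^s$ up to $(1+o(1))$ factors, yielding $2q_k\leq(k+1)\binom{r}{s}q_{k-1}^s(1+o(1))$, and hence the claimed inequality $q_k<\tfrac{k+1}{2}\binom{r}{s}q_{k-1}^s$ for $k$ sufficiently large, the ``sufficiently large'' being exactly what lets the $(1+o(1))$ factor be swallowed and lets the difference term be neglected. The main obstacle I anticipate is handling the difference $B_{r,s}(p_k)-B_{r,s}(p_{k-1})$ cleanly: one wants the final coefficient to come out as the clean $\tfrac{k+1}{2}$ rather than some larger constant, so the bookkeeping must show that this difference, multiplied by $k$, does not inflate the leading $\binom{r}{s}q_{k-1}^s$ beyond what the $(k+1)$ factor already accounts for. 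The role of the hypothesis $s\geq 2$ is crucial here, since it makes $q_{k-1}^s$ genuinely smaller than $q_{k-1}$ and is ultimately what drives the doubly-exponential decay asserted in Theorem \ref{main}.
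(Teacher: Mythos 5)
Your overall strategy---manipulate the defining identity $f_k(p_k,p_{k-1})=0$ directly and bound the binomial tail by the first-moment (union) bound $\prob{\bin{r,q_{k-1}}\geq s}\leq\binom{r}{s}q_{k-1}^s$---is sound and genuinely different from the paper's proof. However, the step you yourself flag as the main obstacle is a real gap, and neither of the two ways you propose to handle $k\bigl(B_{r,s}(p_k)-B_{r,s}(p_{k-1})\bigr)$ works as stated. The mean value theorem route gives $B_{r,s}(p_k)-B_{r,s}(p_{k-1})\leq r\binom{r-1}{s-1}q_{k-1}^{s-1}(p_k-p_{k-1})\leq s\binom{r}{s}q_{k-1}^{s}$, so after multiplying by $k$ and adding the tail term you only get $2q_k\leq(ks+1)\binom{r}{s}q_{k-1}^s$, weaker than the claimed constant by a factor of roughly $s$. ``Simply dropping'' the difference term is not legitimate either: far from being lower order, it is the dominant contribution, since it carries the factor $k$ while the tail term $1-B_{r,s}(p_{k-1})$ does not. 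Finally, the $(1+o(1))$ bookkeeping cannot be rescued by taking $k$ large: the target inequality has the exact coefficient $\frac{k+1}{2}\binom{r}{s}$ with no slack, so a bound $(k+1)\binom{r}{s}q_{k-1}^s(1+o(1))$ with error factor above $1$ never implies it, however large $k$ is.

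The fix is a one-line regrouping of your own identity. Write it as
\[
2q_k=(k+1)\bigl(1-B_{r,s}(p_{k-1})\bigr)-k\bigl(1-B_{r,s}(p_k)\bigr)\,,
\]
and discard the subtracted term, which is strictly positive because $p_k<1$; this gives $2q_k<(k+1)\bigl(1-B_{r,s}(p_{k-1})\bigr)\leq(k+1)\binom{r}{s}q_{k-1}^s$, which is exactly the lemma. (Equivalently: in your grouping, bound $B_{r,s}(p_k)-B_{r,s}(p_{k-1})<1-B_{r,s}(p_{k-1})$ instead of using the MVT.) Note the union bound is exact, so no $(1+o(1))$ is needed anywhere, and the strict inequality comes for free. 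Completed this way, your argument is actually cleaner and stronger than the paper's: it proves the inequality for every $k\geq 1$, with no hypothesis $p_k\to 1$ and no ``sufficiently large $k$''. The paper argues differently: it fixes $p_{k-1}$ and studies $x\mapsto f_k(x,p_{k-1})$ near $x=1$, showing this function is convex with derivative below $-2$ once $x>(r-s)/(r-1)$ (this is where ``$k$ sufficiently large'' enters, to ensure $p_{k-1}>(r-s)/(r-1)$), so that $f_k(1-\eps)>f_k(1)+2\eps$; it then takes $\eps=\frac{k+1}{2}\binom{r}{s}q_{k-1}^s$, applies the same union bound to show $f_k(1-\eps)>0$, and concludes $q_k<\eps$ by monotonicity of $f_k$ in $x$. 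One last nit: $s\geq 2$ plays no role in this lemma's proof (it matters only downstream, in Lemma \ref{doubexp}).
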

\begin{proof}
Choose $k$ sufficiently large that $p_{k-1}>(r-s)/(r-1)$. Write $f_k(x)$ for $f_k(x,p_{k-1})$. Recall that
\[
f_k'(x)=-kr\binom{r-1}{s-1}x^{r-s}(1-x)^{s-1}-2\,,
\]
so $f_k'(1)=-2$ and
\[
f_k''(x)=-kr\binom{r-1}{s-1}x^{r-s-1}(1-x)^{s-2}\left[(r-s)(1-x)-(s-1)x\right]\,,
\]
which is positive provided $(1-x)/x<(s-1)/(r-s)$, that is to say provided $x>(r-s)/(r-1)$. So $f_k'(x)<-2$ in this region, and so, provided $\varepsilon<(s-1)/(r-1)$,
\begin{align*}
f_k(1-\varepsilon)&>f_k(1)+2\varepsilon \\
&=(k+1)(B_{r,s}(p_{k-1})-1)+2\varepsilon \,.
\end{align*}
Note that $1-B_{r,s}(p_{k-1})$ is the probability that at least $s$ of $r$ independent events with probability $q_{k-1}=1-p_{k-1}$ occur. Consequently this is bounded by the expected number of $s$-tuples of events which all occur, and that is $\binom{r}{s}q_{k-1}^s$. So, setting
\[
\varepsilon=\frac{k+1}{2}\binom{r}{s}q_{k-1}^s\,,
\]
$f_k(1-\varepsilon)>0$ provided $\varepsilon<(s-1)/(r-1)$. If $\eps>(s-1)/(r-1)$ then $\eps>q_{k-1}>q_k$, so in either case $q_k<\varepsilon$.
\end{proof}

Next we show that Lemma \ref{recurrence} implies doubly-exponential decay provided we can find some $k_0$ with
\begin{equation}
q_{k_0}<\left(\frac{2}{\binom{r}{s}(k_0+3)}\right)^{1/(s-1)}\,.\label{cutoff}
\end{equation}
Lemma \ref{doubexp} is analogous to Lemma 3.3 of \cite{MalPaq1}, but we improve the condition on $y_k$ by bounding $\sum_i s^{-i}\log(k+i)$ more tightly.
\begin{lem}\label{doubexp}Fix integers $s\geq 2$ and $k\geq 0$, and positive reals $a$ and $y_k$. Let $y_i=a(i+1)y_{i-1}^s$ for $i>k$. If $y_k<(a(k+3))^{-1/(s-1)}$ then there is some $C<1$ and $b>0$ such that
\[y_{k+j}<bC^{s^j}\,.\]
\end{lem}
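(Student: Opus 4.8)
The plan is to linearise the recurrence by taking logarithms. Every $y_i$ is positive, so set $u_i=\log y_i$; then $y_i=a(i+1)y_{i-1}^s$ becomes the affine recursion $u_i=su_{i-1}+\log(a(i+1))$. Iterating from $u_k$ gives
\[
s^{-j}u_{k+j}=u_k+\sum_{m=1}^{j}s^{-m}\log(a(k+m+1))\,.
\]
Since $\log(a(k+m+1))$ grows only linearly in $m$ while $s^{-m}$ decays geometrically, the partial sums converge, so $s^{-j}u_{k+j}\to L:=u_k+S$ with $S=\sum_{m\ge1}s^{-m}\log(a(k+m+1))$. If I can show $L<0$ the result follows easily: for all large $j$ one has $u_{k+j}<\tfrac12 L\,s^j$, so with $C=\exp(\tfrac12 L)\in(0,1)$ we get $y_{k+j}<C^{s^j}$ for $j$ beyond some $j_0$, and enlarging the constant to some $b\ge1$ to cover the finitely many smaller indices gives $y_{k+j}<bC^{s^j}$ for every $j$, as required.

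It remains to deduce $L<0$ from the hypothesis $y_k<(a(k+3))^{-1/(s-1)}$, equivalently $-u_k>\tfrac1{s-1}\log(a(k+3))$. Writing $S=\tfrac{\log a}{s-1}+T$ with $T=\sum_{m\ge1}s^{-m}\log(k+m+1)$, and noting $\tfrac1{s-1}\log(a(k+3))=\tfrac{\log a}{s-1}+\tfrac1{s-1}\log(k+3)$, everything reduces to the single estimate $T\le\tfrac1{s-1}\log(k+3)$. Granting this, $S\le\tfrac1{s-1}\log(a(k+3))<-u_k$, so $L=u_k+S<0$.

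The crux — and the point the introduction flags as the improvement over \cite{MalPaq1} — is this tight bound on $T$. I would prove it by telescoping $\log(k+m+1)=\log(k+2)+\sum_{j=1}^{m-1}g_j$, where $g_j=\log\frac{k+j+2}{k+j+1}>0$ decreases in $j$, and then swapping the order of summation using $\sum_{m>j}s^{-m}=\tfrac{s^{-j}}{s-1}$ to obtain
\[
T=\frac{\log(k+2)}{s-1}+\frac1{s-1}\sum_{j\ge1}s^{-j}g_j\,.
\]
Since $g_j\le g_1$ for all $j\ge1$, the tail sum is at most $g_1\sum_{j\ge1}s^{-j}=\tfrac{g_1}{s-1}\le g_1$, the final inequality being exactly where the hypothesis $s\ge2$ enters, as it forces $\tfrac1{s-1}\le1$. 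Because $\log(k+2)+g_1=\log(k+3)$, this yields $T\le\tfrac1{s-1}\log(k+3)$ and closes the argument. The only genuine difficulty is keeping this constant sharp: a cruder term-by-term estimate of $T$ loses precisely the factor that makes $s\ge2$ sufficient, so the telescope-then-Abel manipulation is essential rather than cosmetic.
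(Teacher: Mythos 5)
Your proof is correct and follows essentially the same route as the paper: taking logarithms of the recurrence is just a reorganisation of the paper's explicit product formula for $y_{k+j}$, and your key estimate $T\leq\frac{1}{s-1}\log(k+3)$ --- telescoping the logarithms, swapping the order of summation, bounding the decreasing increments $g_j$ by $g_1$, and using $s\geq 2$ via $\frac{1}{s-1}\leq 1$ --- is exactly the paper's bound on $\sum_i s^{-i}\log(k+i+1)$. The only cosmetic difference is that the paper extracts the constants $b, C$ directly for all $j$ from the same negative constant, whereas you absorb finitely many initial indices into $b$; both are fine.
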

\begin{proof}
Write $y_{k+j}$ in terms of $y_k$ for $j=1,2,\ldots$.
\begin{align*}
y_{k+2}&=a(k+3)y_{k+1}^s \\
&=a^{s+1}y_k^{s^2}(k+2)^s(k+3) \,, \\
y_{k+3}&=a(k+4)y_{k+2}^s \\
&=a^{s^2+s+1}y_k^{s^3}(k+2)^{s^2}(k+3)^s(k+4) \,,
\end{align*}
and so on, so
\[
y_{k+j}=a^{(s^j-1)/(s-1)}y_k^{s^j}(k+2)^{s^{j-1}}(k+3)^{s^{j-2}}\cdots (k+j+1)\,.
\]
So
\begin{align*}
\log\frac{y_{k+j}}{y_k}=&(s^j-1)\log y_k+\frac{s^j-1}{s-1}\log a \\
&+s^{j-1}\log(k+2)+s^{j-2}\log(k+3)+\cdots+\log(k+j+1)\,.
\end{align*}
Now
\[
\frac{\log(k+2)}{s}+\frac{\log(k+3)}{s^2}+\cdots+\frac{\log(k+j+1)}{s^j}<(1-s^{-j})\sum_{i=1}^\infty\frac{\log(k+i+1)}{s^i}
\]
and so
\[
\left(1+\frac{1}{s^j-1}\right)\left(\frac{\log(k+2)}{s}+\cdots+\frac{\log(k+j+1)}{s^j}\right)<\sum_{i=1}^\infty\frac{\log(k+i+1)}{s^i}\,.
\]

We can rewrite the RHS to get, using the fact that $\log$ is increasing and concave (which also implies absolute convergence),
\begin{align*}
&\sum_{i=1}^\infty\frac{\log(k+2)}{s^i}+\sum_{i=2}^\infty\frac{\log(k+3)-\log(k+2)}{s^i}+\sum_{i=3}^\infty\frac{\log(k+4)-\log(k+3)}{s^i}+\cdots \\
&=\frac{\log(k+2)+[\log(k+3)-\log(k+2)]/s+[\log(k+4)-\log(k+3)]/s^2+\cdots}{s-1} \\
&<\frac{\log(k+2)+[\log(k+3)-\log(k+2)](1/s+1/s^2+\cdots)}{s-1} \\
&\leq\frac{\log(k+3)}{s-1}\,.
\end{align*}
So
\[
\frac{1}{s^j-1}\log\frac{y_{k+j}}{y_k}<\log y_k+\frac{\log a+\log(k+3)}{s-1}\,.
\]
If $y_k<(a(k+3))^{-1/(s-1)}$ then the right-hand side is some negative constant $c$, so
\[
y_{k+j}<y_k\mathrm{e}^{cs^j-c}=bC^{s^j}\,,
\]
where $b=y_k\mathrm{e}^c>0$ and $C=\mathrm{e}^c<1$, as required.
\end{proof}

Next we show that a suitable $k_0$ exists provided $p_*=1$; it is sufficient to show $q_k=o(1/k)$ in this case.

\begin{lem}\label{quadratic}
If $q_k\to 0$ then $q_k=O(1/k^2)$.
\end{lem}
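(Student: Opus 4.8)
The plan is to extract a quantitative rate from the exact recurrence that the defining equation $f_k(p_k,p_{k-1})=0$ imposes on the $q_k$. Writing $g(q)=1-B_{r,s}(1-q)=\prob{\bin{r,q}\geq s}$, that equation rearranges to
\[
2q_k=(k+1)g(q_{k-1})-kg(q_k)\,.
\]
Since $s\geq 2$, the first-moment bound already used in Lemma \ref{recurrence} gives $g(q)\leq\binom{r}{s}q^s\leq\binom{r}{s}q^2$, so $g(q)$ is of strictly smaller order than $q$ as $q\to 0$. This smallness is the feature I would exploit throughout.

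First I would prove that $\sum_k q_k<\infty$. Setting $v_k=kg(q_k)\geq 0$, the recurrence becomes the telescoping identity $v_k-v_{k-1}=2g(q_{k-1})-2q_k$. Summing from some $k_0$ up to $K$ and discarding $v_K\geq 0$ yields
\[
\sum_{k=k_0+1}^{K}q_k\leq\frac{v_{k_0}}{2}+\sum_{k=k_0}^{K-1}g(q_k)\,.
\]
Because $q_k\to 0$, I may fix $k_0$ so large that $\binom{r}{s}q_k\leq\tfrac12$, hence $g(q_k)\leq\tfrac12 q_k$, for all $k\geq k_0$; the final sum is then at most $\tfrac12\bigl(q_{k_0}+\sum_{k>k_0}q_k\bigr)$ and can be absorbed into the left-hand side. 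This bounds the partial sums uniformly, so $\sum_k q_k$ converges. Since $p_k$ is increasing, $q_k$ is nonincreasing, and a nonincreasing nonnegative summable sequence satisfies $kq_k\to 0$ (compare $q_k$ with the tail $\sum_{j>k/2}q_j$); thus $q_k=o(1/k)$.

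Finally I would bootstrap to the stated bound using the estimate $q_k\leq C'(k+1)q_{k-1}^2$ with $C'=\tfrac12\binom{r}{s}$, which follows from Lemma \ref{recurrence} and $q_{k-1}^s\leq q_{k-1}^2$. Once $q_{k-1}<\bigl(2C'(k+1)\bigr)^{-1}$—which holds for all large $k$ by $q_k=o(1/k)$—this gives $q_k<\tfrac12 q_{k-1}$, and a short check shows the condition $q_{k-1}<(2C'(k+1))^{-1}$ then propagates to every subsequent step. Hence $q_k$ decays geometrically from some point on, which is certainly $O(1/k^2)$ (and in fact more than enough to reach a starting point satisfying \eqref{cutoff}).

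I expect the main obstacle to be the summability step: the hypothesis supplies only $q_k\to 0$ with no rate, and the real difficulty is converting a purely qualitative statement into a quantitative one. The key idea is that the telescoping identity turns the recurrence into a statement about $\sum_k q_k$, while $g(q)=O(q^2)$ makes the error term $\sum_k g(q_k)$ negligible against $\sum_k q_k$; the monotonicity of $q_k$ then upgrades summability to $o(1/k)$, after which the quadratic recurrence finishes the argument routinely.
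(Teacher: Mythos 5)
Your proof is correct, but it follows a genuinely different route from the paper's. The paper argues locally and in one shot: writing $g_k(y,q)=f_k(1-y,1-q)$ and expanding $\prob{\bin{r,q}\geq s}$ as a polynomial $\sum_{i=s}^r a_iq^i$, it shows that once $q_k$ is below a constant threshold (depending only on $r,s$) one has $g_k\bigl(((k-1)/k)^2q_k,\,q_k\bigr)>0$, whence by monotonicity of $f_k$ in its first argument the next term satisfies $q_{k+1}<((k-1)/k)^2q_k$; the product of these per-step contraction factors telescopes directly to $O(1/k^2)$. You argue globally: the exact identity $2q_k=(k+1)g(q_{k-1})-kg(q_k)$ telescopes as a \emph{sum} (via $v_k=kg(q_k)$, correctly computed) to give $\sum_k q_k<\infty$, the monotonicity of $p_k$ (available from Section 4) upgrades summability to $q_k=o(1/k)$, and the quadratic recurrence of Lemma \ref{recurrence} then bootstraps this to geometric decay, which is stronger than the stated bound. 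All the individual steps check out: the rearrangement of $f_k(p_k,p_{k-1})=0$ is exact, the absorption of $\sum g(q_k)\leq\tfrac12\sum q_k$ into the left-hand side is valid once $\binom{r}{s}q_k\leq\tfrac12$, and there is no circularity in invoking Lemma \ref{recurrence}, which is proved independently and whose hypothesis $p_k\to 1$ is exactly your hypothesis. Comparing the two: the paper's argument is self-contained (it needs neither Lemma \ref{recurrence} nor the monotonicity of $p_k$) and produces the clean factor $(1-1/k)^2$ per step; yours reaches a strictly stronger conclusion (exponential rather than quadratic decay), and your intermediate bound $q_k=o(1/k)$ is precisely what the paper itself notes is sufficient to find a starting point for \eqref{cutoff}. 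Incidentally, your exact identity gives $2q_k\leq(k+1)g(q_{k-1})\leq(k+1)\binom{r}{s}q_{k-1}^s$ immediately, which re-derives Lemma \ref{recurrence} without any largeness condition on $k$ -- a small simplification of the paper's Section 5 that your approach exposes.
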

\begin{proof}Write $g_k(y,q)=f_k(1-y,1-q)$. Now
\begin{align*}
g_k(y,q)&=(k+1)(B_{r,s}(1-q)-1)-k(B_{r,s}(1-y)-1)+2y \\
&=k\prob{\bin{r,y}\geq s}-(k+1)\prob{\bin{r,q}\geq s}+2y \,.
\end{align*}
Now $\prob{\bin{r,q}\geq s}=\sum_{i=s}^ra_iq^i$, for some integers $a_i$ depending only on $s$ and $r$. Consequently, setting $y=\left(\frac{k-1}{k}\right)^2q_k$,
\[
g_k(y,q_k)=\sum_{i=s}^ra_iq_k^i\left(\frac{(k-1)^{2i}}{k^{2i-1}}-(k+1)\right)+2\left(\frac{k-1}{k}\right)^2q_k\,.
\]
Since $(k-1)^{2i}\geqslant k^{2i}-2ik^{2i-1}$,
\begin{align*}
\frac{(k-1)^{2i}}{k^{2i-1}}-(k+1)&\geqslant \frac{k^{2i}-2ik^{2i-1}-k^{2i}-k^{2i-1}}{k^{2i-1}} \\
&=-(2i+1)\,.
\end{align*}
Furthermore, $\frac{(k-1)^{2i}}{k^{2i-1}}<k+1$, so, provided $k\geqslant 4$,
\begin{align*}
g_k(y,q_k)&\geqslant-\sum_{i=s}^r(2i+1)\max(a_i,0)q_k^i+2\left(\frac{k-1}{k}\right)^2q_k \\
&>q_k-q_k^s\sum_{i=s}^r(2i+1)\max(a_i,0)\,,
\end{align*}
which is positive provided $q_k<\left(\sum_{i=s}^r(2i+1)\max(a_i,0)\right)^{-1/(s-1)}$. So provided $q_k$ is less than some positive constant which depends only on $r$ and $s$, we have $q_{k+1}<\left[\frac{k-1}{k}\right]^2q_k$, and so if $q_k\to 0$ we must have $q_k=O(1/k^2)$. \end{proof}

Combining Lemmas \ref{recurrence}, \ref{doubexp} and \ref{quadratic}, we have shown that for any $s\geq 2$, if $p_*=1$ then $1-p_k$ eventually decays doubly exponentially, with $-\log(1-p_k)=\Omega(s^k)$.

The point at which the doubly-exponential decay starts, that is to say the first value of $k_0$ which satisfies \eqref{cutoff}, varies with $r$ and $s$. For $s=2$ and $r=2,3,4,5$ we can calculate the values of $p_k$ using Maple, which gives
\begin{center}\begin{tabular}{ccc}
$r$ & $k_0$ & $p_{k_0}$ \\
2 & 4 & 0.7761155642 \\
3 & 18 & 0.9793382628 \\
4 & 98 & 0.9977982955 \\
5 & 2416 & 0.9999471884
\end{tabular}\end{center}

For $r=6$ and $s=2$, numerical solutions are impractical, but we can bound the value of $\log k_0$ as being somewhere between 23 and 58. Write $h_k(x)$ for $-\pderiv{}{x}f_k(x,p_{k-1})$; provided $p_k<0.7$ we have $f(p_{k-1})/h_k(0.7)<p_k-p_{k-1}<f(p_{k-1})/h_k(p_{k-1})$, since $h_k$ is increasing in this region. These relations allow us to inductively calculate lower and upper bounds for $p_k$ which give $p_{213778}<0.7<p_{24864713}$.
Since $\prob{\bin{6,q}\geq 2}=15q^2-40q^3+45q^4-24q^5+5q^6$, from Lemma \ref{quadratic} we get that $q_{k+1}<\left[\frac{k-1}{k}\right]^2q_k$ provided $q_k<1/535$; write $k_1$ for the first value of $k$ for which this is satisfied. Splitting the region $(0.7,534/535)$ into regions of size at most $0.01$ and considering the maximum and minimum values of $f(p)$ and $h_k(x)$ in each region allows us to bound the value of the $p_k$ between pairs of harmonic series. These calculations give $23<\log k_1<31$. For $k\geq k_1$, $(k-1)^2q_k$ is decreasing, and it is less than $\mathrm{e}^{62}/535$ at $k_1$. Consequently, if $\log k>58$ then $q_k<\mathrm{e}^4/535(k-1)<2/15(k+3)$, so $\log k_0<58$; certainly $\log k_0>\log k_1>23$.

\section{Greedy choice between two}

Now we turn our attention to the case $r=2,s=1$. Here we have that $p_k$ is the unique solution in $(0,1)$ to
\[
(k+1)p_{k-1}^2-kp_k^2-2p_k+1=0\,;
\]
dividing by $k$ and completing the square gives
\[
\left(p_k+\frac{1}{k}\right)^2=\frac{(k+1)(kp^2_{k-1}+1)}{k^2}\,.
\]

%
We prove the following result.

\begin{thm}\label{21b}For greedy choice with two choices, $(1-p_k)\log(k+1)\to 2$.
\end{thm}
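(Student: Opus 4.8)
The plan is to pass to the complementary quantity $q_k=1-p_k$ and convert the given quadratic recurrence into an asymptotic difference equation for $1/q_k$. First I would record that here $B_{2,1}(p)=p^2$, so $f(p)=B_{2,1}(p)-2p+1=(1-p)^2$ has smallest positive root $p_*=1$; hence by Theorem \ref{main} (equivalently, the first lemma of Section 4) $p_k\uparrow 1$, i.e.\ $q_k\downarrow 0$. This is exactly the feature that makes the case non-degenerate, and it supplies the small parameter to expand in.

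Next I would substitute $p_k=1-q_k$ into $(k+1)p_{k-1}^2-kp_k^2-2p_k+1=0$. The constant terms cancel, and collecting what remains gives the exact identity $2(k+1)\Delta_k=(k+1)q_{k-1}^2-kq_k^2$, where $\Delta_k=q_{k-1}-q_k>0$ (positivity because $p_k$ is increasing). Writing the right-hand side as $q_{k-1}^2+k\Delta_k(q_{k-1}+q_k)$ and solving for $\Delta_k$ yields the clean closed form
\[
\Delta_k=\frac{q_{k-1}^2}{2(k+1)-k(q_{k-1}+q_k)}\,.
\]
Heuristically the denominator is $2k(1+o(1))$, so $\Delta_k\approx q_{k-1}^2/(2k)$, which integrates to $1/q\approx\tfrac12\log k$ and hence $q_k\approx 2/\log k$; the remaining work is to make this rigorous.

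To do so I would set $u_k=1/q_k\to\infty$ and compute
\[
u_k-u_{k-1}=\frac{\Delta_k}{q_kq_{k-1}}=\frac{q_{k-1}/q_k}{2(k+1)-k(q_{k-1}+q_k)}\,.
\]
From the closed form, $\Delta_k/q_{k-1}=q_{k-1}/[2(k+1)-k(q_{k-1}+q_k)]\to 0$, so consecutive terms are comparable: $q_{k-1}/q_k\to 1$. Since also $[2(k+1)-k(q_{k-1}+q_k)]/k\to 2$, it follows that $k(u_k-u_{k-1})\to\tfrac12$. I would then invoke the Stolz--Ces\`aro theorem with $a_k=u_k$ and $b_k=\log k$: because $\log k-\log(k-1)\sim 1/k$, the quotient $(u_k-u_{k-1})/(\log k-\log(k-1))\to\tfrac12$, whence $u_k/\log k\to\tfrac12$. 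Finally $q_k\log(k+1)=\bigl(\log(k+1)/\log k\bigr)\bigl(\log k/u_k\bigr)\to 1\cdot 2=2$.

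The main obstacle is purely the bookkeeping in this last step: I must verify that $q_{k-1}/q_k\to1$ genuinely follows from $q_k\to0$ together with the recurrence (rather than being assumed), and that the correction $k(q_{k-1}+q_k)$ in the denominator is uniformly $o(k)$, so that the Stolz--Ces\`aro hypotheses hold with the stated limit. Both are routine once $q_k\to0$ is in hand, so the genuinely new input is simply the observation that $p_*=1$ forces $q_k\to0$; everything else is a controlled continuous-approximation argument made precise via Stolz--Ces\`aro.
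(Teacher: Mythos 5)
Your proposal is correct, and it takes a genuinely different route from the paper's proof. Your algebra checks out: substituting $p_k=1-q_k$ into the recurrence does give the exact identity $2(k+1)(q_{k-1}-q_k)=(k+1)q_{k-1}^2-kq_k^2$, hence the closed form
\[
q_{k-1}-q_k=\frac{q_{k-1}^2}{2(k+1)-k(q_{k-1}+q_k)}\,,
\]
whose denominator is at least $2$, so $(q_{k-1}-q_k)/q_{k-1}\leq q_{k-1}/2\to 0$ and indeed $q_{k-1}/q_k\to 1$ once $q_k\to 0$ is known; that input you correctly take from Section 4 (for $r=2$, $s=1$ one has $B_{2,1}(p)=p^2$, so $f(p)=(1-p)^2$ and $p_*=1$) — and you are right to lean on the Section 4 lemma rather than Theorem \ref{main} itself, since the relevant clause of Theorem \ref{main} is exactly the statement being proved. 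From there $k(u_k-u_{k-1})\to 1/2$ and Stolz--Ces\`aro give $u_k/\log k\to 1/2$, which is the theorem. The paper proceeds quite differently: Lemma \ref{21a} runs barrier comparisons of $p_k$ against the curves $1-a/\log(k+1)$, showing by induction that $p_k>1-2/\log(k+1)$ for all $k\geq 2$ and that once $p_j$ drops below an $a$-barrier with $a<2$ it stays below; this yields convergence of $a_k=(1-p_k)\log(k+1)$ to some $\alpha\leq 2$ together with eventual monotonicity when $\alpha<2$, and the proof of Theorem \ref{21b} then excludes $\alpha<2$ via the multiplicative bound $a_k\geq a_{k-1}\left(1+\frac{1/2-\alpha/4}{k\log k}\right)$ and the divergence of $\sum 1/(k\log k)$. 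Your route is shorter and structurally cleaner: existence of the limit comes for free from Stolz--Ces\`aro instead of the limsup/monotonicity case analysis, and you avoid the paper's numerical anchor (the unstated check that $p_{100}\leq 1-1/\log 101$). What the paper's method buys in exchange is explicit non-asymptotic information — the uniform bound $p_k>1-2/\log(k+1)$ for all $k\geq 2$ — and self-containedness of Section 6, since the barrier induction re-derives $p_k\to 1$ without appeal to Section 4.
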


First we show that the limit exists and is at most 2. We show that if it were less than 2 we would have monotonicity, from which we later obtain a contradiction.

\begin{lem}\label{21a}There is some $\alpha\leq 2$ such that $(1-p_k)\log(k+1)\to\alpha$; if $\alpha<2$ then $(1-p_k)\log(k+1)$ is increasing for $k$ sufficiently large.
\end{lem}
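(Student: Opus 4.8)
The plan is to analyse the single quantity $u_k=(1-p_k)\log(k+1)$ and show that the recurrence makes it behave like a one-dimensional dynamical system whose only stable fixed point is $2$. Write $q_k=1-p_k$; since $r=2,s=1$ has $p_*=1$, Theorem~\ref{main} gives $q_k\to 0$. Completing the square gives the closed form $p_k=\bigl(\sqrt{(k+1)(kp_{k-1}^2+1)}-1\bigr)/k$, and rationalising the resulting expression for $q_k$ yields the exact identity
\[
\frac{q_k}{q_{k-1}}=\frac{\sqrt{k+1}\,(2-q_{k-1})}{\sqrt{k+1}+\sqrt{kp_{k-1}^2+1}}\,.
\]
First I would expand the denominator. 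Writing $kp_{k-1}^2+1=(k+1)-kq_{k-1}(2-q_{k-1})$ and factoring out $\sqrt{k+1}$, the quantity under the remaining root is $(1-q_{k-1})^2+q_{k-1}(2-q_{k-1})/(k+1)$, so a Taylor expansion gives
\[
\frac{q_{k-1}}{q_k}=1+\frac{q_{k-1}}{2(k+1)(1-q_{k-1})}+O\!\left(\frac{q_{k-1}^2}{k^2}\right)\,.
\]

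From this one identity I would extract two consequences. Setting $v_k=1/q_k$ gives $v_k-v_{k-1}=\bigl(2(k+1)(1-q_{k-1})\bigr)^{-1}+O(q_{k-1}/k^2)$, which for large $k$ lies between $\tfrac1{2(k+1)}$ and $\tfrac1{k+1}$ up to a summable error; summing shows $\tfrac12\log k-C\le v_k\le\log k+C$, hence $u_k$ is bounded and $\limsup_k u_k\le 2$. Converting the same identity into a statement about $u_k$ (using $u_{k-1}=q_{k-1}\log k$ and $\log(1+1/k)=1/k+O(1/k^2)$) gives the drift relation
\[
u_k-u_{k-1}=\frac{q_{k-1}\,(2-u_{k-1})}{2(k+1)}\bigl(1+o(1)\bigr)+O\!\left(\frac{q_{k-1}^2}{k}\right)\,.
\]
The content I need from this is that the increments satisfy $|u_k-u_{k-1}|=O(q_{k-1}/k)\to0$, and that when $u_{k-1}$ is bounded away from $2$ the sign of $u_k-u_{k-1}$ is that of $2-u_{k-1}$, with $|u_k-u_{k-1}|\ge c\,q_{k-1}/k$.

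The remainder is a soft attractor argument. Put $\alpha=\limsup_k u_k\le 2$. If $\alpha<2$, then eventually $u_k\le(\alpha+2)/2<2$, so the drift is positive and $u_k$ is eventually strictly increasing; being bounded above by $\alpha$ it converges, necessarily to $\alpha$ --- which is precisely the monotonicity assertion. If $\alpha=2$, then for each $a<2$ the set $\{u\ge a\}$ is absorbing for large $k$ (from $u_{k-1}\in[a,2)$ the drift is positive, while from $u_{k-1}\ge2$ any decrease is only $O(q_{k-1}/k)=o(1)$), and it is entered because $\limsup_k u_k=2>a$; hence $\liminf_k u_k\ge a$ for every $a<2$, so $u_k\to2$. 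In both cases $u_k\to\alpha\le2$, with eventual monotone increase when $\alpha<2$.

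The main obstacle is the error control in the square-root expansion: the leading-order terms cancel (the $q_{k-1}^2$ contribution of $2-q_{k-1}$ is exactly matched by that of the expanded root), so I must carry the expansion one order further and verify that the genuine remainder is $O(q_{k-1}^2/k)$ rather than $O(1/k^2)$ --- this extra factor of $q_{k-1}$ is what makes the remainder $o(q_{k-1}/k)$ and hence negligible against the drift term whenever $u_{k-1}$ is bounded away from $2$. Once this estimate is secured, the a priori bounds, the boundedness of $u_k$, and the dynamical argument above are all routine.
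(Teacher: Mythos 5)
Your proposal is correct in substance, but it runs on a genuinely different engine from the paper's proof. The paper never linearises the recurrence: working with the same quantity (your $u_k$, its $a_k$), it supposes a crossing of a barrier curve $1-a/\log(k+1)$ (that is, $u_{k-1}\geq a$ but $u_k\leq a$ for $a<2$, or the reverse orientation for $a=2$), multiplies out the squared recurrence directly, and reaches a contradiction using only $\frac{1}{k+1}<\log(k+1)-\log k<\frac{1}{k}$; this shows $u_k<2$ for all $k\geq 2$ and that each level set $\{u_k>a\}$, $a<2$, is absorbing once $k>\exp(3a/(2-a))$, and it then finishes with the same $\limsup$ dichotomy you use. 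You instead solve the recurrence in closed form, rationalise to get an exact expression for $q_k/q_{k-1}$, and Taylor-expand into a drift relation, getting your a priori bounds by summing increments of $1/q_k$ rather than from an invariant barrier. The endgames coincide (absorbing level sets plus a dichotomy on $\limsup u_k$), but the mechanisms differ, and each buys something. Yours is conceptually cleaner, exhibits $2$ as the attracting fixed point of the drift, and in fact gives more than you claim: keeping the sharp constant in $v_k-v_{k-1}=\bigl(2(k+1)(1-q_{k-1})\bigr)^{-1}+O(q_{k-1}/k^2)$ and using $q_k\to 0$ yields $v_k=(\tfrac12+o(1))\log k$, i.e.\ $u_k\to 2$, which would subsume Theorem \ref{21b} outright; the paper needs a separate, more delicate argument for that. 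The paper's computation, on the other hand, requires no asymptotic bookkeeping, does not use $p_k\to 1$ from Section 4, and produces the explicit thresholds $\exp(3a/(2-a))$ that are reused in the proof of Theorem \ref{21b}.

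Two repairs are needed, both minor. First, do not cite Theorem \ref{main} for $q_k\to 0$: part of Theorem \ref{main} is the very statement being proved here, so cite instead the lemma of Section 4 that $p_k$ increases to $p_*$, noting that $p_*=1$ in this case because $B_{2,1}(p)-2p+1=(1-p)^2$. Second, in the $\alpha=2$ case your absorbing-set justification splits at $u_{k-1}=2$, but positivity of the increment cannot be asserted on all of $[a,2)$: very close to $2$ the drift term $q_{k-1}(2-u_{k-1})/(2(k+1))$ is swamped by the $O(q_{k-1}^2/k)$ error. Split instead at any $a'\in(a,2)$: on $[a,a']$ the drift dominates the error (this is exactly your ``bounded away from $2$'' estimate), while on $[a',\infty)$ the one-step decrease is $O(q_{k-1}/k)=o(1)<a'-a$, so the set $\{u\geq a\}$ cannot be exited in either case. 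The same care applies to ``being bounded above by $\alpha$'' in the $\alpha<2$ case: the sequence is eventually bounded above by $(\alpha+2)/2$, not by $\alpha$, and that weaker bound is all the convergence argument needs.
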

\begin{proof}
Suppose that $p_{k-1}>1-\frac{2}{\log k}$ but $p_k\leqslant 1-\frac{2}{\log(k+1)}$. Then we must have
\begin{equation}
\left(1-\frac{2}{\log(k+1)}+\frac{1}{k}\right)^2 > \frac{(k+1)\left(k(1-\tfrac{2}{\log k})^2+1\right)}{k^2}\,. \label{aa}
\end{equation}
The LHS of \eqref{aa} is
\begin{equation}
1+\frac{4}{(\log(k+1))^2}+\frac{1}{k^2}-\frac{4}{\log(k+1)}+\frac{2}{k}-\frac{4}{k\log(k+1)}\,, \label{bb}
\end{equation}
and the RHS is
\begin{equation}
1-\frac{4}{\log k}+\frac{4}{(\log k)^2}+\frac{2}{k}-\frac{4}{k\log k}+\frac{4}{k(\log k)^2}+\frac{1}{k^2}\,. \label{cc}
\end{equation}
Subtracting \eqref{cc} from \eqref{bb} and multiplying by $(\log(k+1)\log k)^2/4$ gives
\[
\left((1+\tfrac{1}{k})\log(k+1)\log k-\log(k+1)-\log k\right)(\log(k+1)-\log k)>\frac{(\log(k+1))^2}{k}\,.
\]
If $(1+\tfrac{1}{k})\log(k+1)\log k-\log(k+1)-\log k<0$ then this is trivially false; otherwise
\begin{align*}
&\left((1+\tfrac{1}{k})\log(k+1)\log k-\log(k+1)-\log k\right)(\log(k+1)-\log k) \\
&<\left((\tfrac{1}{k}+\tfrac{1}{k^2})\log(k+1)\log k-\tfrac{1}{k}\log(k+1)-\tfrac{1}{k}\log k\right) \\
&<\tfrac{1}{k}\log(k+1)\log k+\tfrac{1}{k^2}\log(k+1)\log k-\tfrac{1}{k}\log(k+1) \\
&<\tfrac{1}{k}\log(k+1)\log k<\tfrac{1}{k}(\log(k+1))^2\,,
\end{align*}
a contradiction. (We use the fact that $\frac{1}{k+1}<\log(k+1)-\log k<\frac{1}{k}$, seen by considering $\int_k^{k+1}x^{-1}\mathrm{d}x$.) Consequently if $p_{k-1}>1-\frac{2}{\log k}$ then also $p_k>1-\frac{2}{\log(k+1)}$. Since this holds for $k=2$, by induction it holds for every $k\geqslant 2$.

Conversely, suppose that for some $0<a<2$ and some $k$ we have $p_{k-1}\leqslant 1-\frac{a}{\log k}$ but $p_k\geqslant 1-\frac{a}{\log (k+1)}$.
Then we must have
\[
\left(1-\frac{a}{\log(k+1)}+\frac{1}{k}\right)^2 \leqslant \frac{(k+1)\left(k(1-\tfrac{a}{\log k})^2+1\right)}{k^2}\,,
\]
and the LHS and RHS of this are
\[
1+\frac{a^2}{(\log(k+1))^2}+\frac{1}{k^2}-\frac{2a}{\log(k+1)}+\frac{2}{k}-\frac{2a}{k\log(k+1)}
\]
and
\[
1-\frac{2a}{\log k}+\frac{a^2}{(\log k)^2}+\frac{2}{k}-\frac{2a}{k\log k}+\frac{a^2}{k(\log k)^2}+\frac{1}{k^2}
\]
respectively. We multiply the difference by $k(\log(k+1)\log k)^2/a$ to obtain
\begin{align*}
& 2(k+1)\log(k+1)\log k (\log(k+1)-\log k) \\
&-ak\left((\log(k+1))^2-(\log k)^2\right)-a(\log(k+1))^2\leqslant 0
\end{align*}
Bounding $\log(k+1)-\log k$ as before,
\begin{align*}
& 2(k+1)\log(k+1)\log k (\log(k+1)-\log k) \\
&-ak\left((\log(k+1))^2-(\log k)^2\right)-a(\log(k+1))^2 \\
>& 2\log(k+1)\log k-a(\log(k+1)+\log k)-a(\log(k+1))^2 \\
=&(2-a)\log(k+1)\log k-a(\log(k+1)+\log k) \\
&+a\log(k+1)(\log k-\log(k+1)) \\
>&(2-a)\log(k+1)\log k-a\left(\left(1+\tfrac{1}{k}\right)\log(k+1)+\log k\right) \\
>&\log(k+1)((2-a)\log k-3a)\,,
\end{align*}
which is positive provided $k>\exp(3a/(2-a))$. Consequently, if there exists $j>\exp(3a/(2-a))$ for which $p_j\leqslant1-a/\log(j+1)$ then $p_k<1-a/\log(k+1)$ for every $k>j$. In particular, this holds with $a=1$ and $j=100$.

Let $a_k=(1-p_k)\log(k+1)$ and $\alpha=\lim\sup a_k$. We know that $1<a_k<2$ for $k>100$ and so $1<\alpha\leqslant 2$. We claim that $\lim a_k$ exists, and so $a_k\to \alpha$.

Suppose $\alpha=2$. Then for any $\varepsilon>0$ there exists $j>\exp(6/\varepsilon)$ with $a_j>2-\varepsilon$, and so $a_k>2-\varepsilon$ for all $k>j$. Consequently $a_k\to 2$.

Conversely, suppose $\alpha<2$. Then choose any $\beta$ with $\alpha<\beta<2$. We may find $i$ such that $a_k<\beta$ for all $k>i$. If $j>\max(i,\exp(3\beta/(2-\beta)))$ then $j>\exp(3a_j/(2-a_j))$ so $a_k>a_j$ for all $k>j$. So $a_k$ is strictly increasing for $k$ sufficiently large, and therefore tends to a limit, which must be $\alpha$.
\end{proof}

We complete the analysis by showing that the limit is equal to 2.

\begin{proof}[Proof of Theorem \ref{21b}]
By Lemma \ref{21a}, $(1-p_k)\log(k+1)$ approaches some limit $\alpha\leq 2$. Suppose $\alpha<2$. In this case $a_k$ is eventually increasing so $1<a_k<\alpha$ for all $k$ sufficiently large. Suppose
\[
a_k<a_{k-1}\left(1+\frac{1/2-\alpha/4}{k\log k}\right)\,,
\]
so that
\[
p_k>1-\frac{a_{k-1}}{\log(k+1)}-\frac{a_{k-1}(1/2-\alpha/4)}{k\log k\log(k+1)}\,.
\]
Then we must have
\[
\left(1-\frac{a_{k-1}}{\log(k+1)}-\frac{a_{k-1}(1/2-\alpha/4)}{k\log k\log(k+1)}+\frac{1}{k}\right)^2 < \frac{(k+1)\left(k(1-\tfrac{a_{k-1}}{\log k})^2+1\right)}{k^2}\,.
\]
Again we subtract the LHS from the RHS and multiply by $k(\log(k+1)\log k)^2/a_{k-1}$ to obtain
\begin{align*}
& 2(k+1)\log(k+1)\log k (\log(k+1)-\log k) \\
&-a_{k-1}k\left((\log(k+1))^2-(\log k)^2\right)-a_{k-1}(\log(k+1))^2 \\
&+a_{k-1}(1/2-\alpha/4)^2/k+a_{k-1}(1-\alpha/2)\log k \\
&-(1-\alpha/2)(1+1/k)\log(k+1)\log k\leqslant 0\,.
\end{align*}
If $k$ is sufficiently large we have $a_{k-1}>1>\log(k+1)/k$. Now
\begin{align*}
& 2(k+1)\log(k+1)\log k (\log(k+1)-\log k) \\
&-a_{k-1}k\left((\log(k+1))^2-(\log k)^2\right)-a_{k-1}(\log(k+1))^2 \\
&+a_{k-1}(1/2-\alpha/4)^2/k+a_{k-1}(1-\alpha/2)\log k \\
&-(1-\alpha/2)(1+1/k)\log(k+1)\log k \\
>& 2\log(k+1)\log k-a_{k-1}(\log(k+1)+\log k) \\
&-a_k(\log(k+1))^2-(1-\alpha/2)\log(k+1)\log k \\
=& (1-a_{k-1}+\alpha/2)\log(k+1)\log k \\
&-a_{k-1}\log(k+1)(\log(k+1)-\log k)-a_{k-1}(\log(k+1)+\log k) \\
>& \log(k+1)((1-a_{k-1}+\alpha/2)\log k-3a_{k-1}) \\
>& \log(k+1)((1-\alpha/2)\log k-3\alpha)\,,
\end{align*}
provided $k$ is large enough that $a_{k-1}<\alpha$. If also $k>\exp(6\alpha/(2-\alpha))$ then the final expression above is positive and we have a contradiction. So for some $k_0$ and all $k>k_0$,
\[
a_k\geqslant a_{k-1}\left(1+\frac{1/2-\alpha/4}{k\log k}\right)\,,
\]
so
\[
a_k\geqslant a_{k_0}+a_{k_0}(1/2-\alpha/4)\sum_{j=k_0+1}^k(j\log j)^{-1}\,.
\]
But $\sum_{j=k_0+1}^k(j\log j)^{-1}$ diverges as $k\to\infty$, so this is eventually larger than $\alpha$, a contradiction. So we must have $\alpha=2$.
\end{proof}

\section{Discussion}
\subsection{Comparison with preferential attachment}

The Barab\'asi--Albert preferential attachment tree has limiting degree distribution given by a power law with exponent 3 \cite{BRST}. M\'ori \cite{Mori}, Cooper and Frieze \cite{CF2003}, and Jordan \cite{JJ2006} give generalisations which can produce power laws with any exponent strictly greater than 2. In each of these the preferential choice is made with probability proportional to $w(d(v))$ for some positive linear weight function $w$. Distributions outside this regime may arise if a superlinear or sublinear weight function is chosen. Oliveira and Spencer \cite{OS2005} consider the superlinear case, and show that for any $a>1$ the weight function $w(k)=k^a$ leads to a degenerate distribution, where only finitely many vertices of degree above a certain threshold will ever appear. Rudas, T\'oth and Valk\'o \cite{RTV} give the asymptotic degree distribution for a wide range of sublinear weight functions. Their condition also holds for any bounded weight function, and in the case of constant weights there is exponential decay in the degree distribution; an outline proof of this fact is also given in \cite{BRST}.

In the max choice model, the choice of vertex to attach to is more heavily biased towards vertices of high degree than linear preferential attachment, so we might expect a heavier tail than any such model. For $r=2,s=1$ the proportion of vertices of degree $k$ decays as $(k\log k)^{-2}$; this is a heavier tail than for linear preferential attachment, which gives $k^{-a}$ for some $a>2$, but the $\log k$ factor ensures that it still has a finite mean. For the meek choice model, the probability of attaching to a specific vertex does not necessarily increase with degree, so faster than exponential decay is not surprising.

\subsection{Comparison with Achlioptas processes}

The method of first selecting an $r$-tuple of vertices at random, and then choosing between them according to some rule, resembles Achlioptas processes (see, for example, \cite{RW2012}) in which two or more potential edges are selected at random, and then one of them is chosen to be added to the graph. The principal difference is that Achlioptas processes are a generalisation of the Erd\H{o}s--R\'enyi random graph process, where the number of vertices is fixed and edges are added one by one, whereas these models generalise the Barab{\'a}si--Albert preferential attachment process, where new vertices of degree 1 are added to a tree. In Achlioptas processes, as in the Erd\H{o}s--R\'enyi process, the random selection is uniform, whereas we consider preferential selection of vertices. Replacing the independent preferential choices in our model with independent uniform choices, we obtain a process we might call uniform attachment with choice. D'Souza, Krapivsky and Moore \cite{DKM} consider such processes with several potential choice criteria, including max choice and min choice. We summarise how changing from uniform to preferential attachment affects the degree distributions for the $r=1$ and $r=2$ cases in the following table.

\vspace{2ex}
\begin{tabular}{r|ccc}
\toprule
& max choice & no choice & min choice \\
\midrule
preferential attachment & $(k\log k)^{-2}$ & $k^{-3}$ \cite{BRST} & $\exp(-\Omega(2^k))$ \\
uniform attachment & $(3/2)^{-k}$ \cite{DKM} & $2^{-k}$ \cite{BRST} & $\exp(-\Omega(2^k))$ \cite{DKM} \\
\bottomrule
\end{tabular}

\subsection{Sampling without replacement}

All our analysis in this paper assumes that the preferential choices are made independently, as in \cite{MalPaq1,MalPaq2}. If instead the choice is made without replacement, it is not clear that the values of $F_{m}(k)/2m$ converge, and if they do, the sequence of limits may be different. We can, however, identify a condition under which Theorem \ref{converge} still holds.

Suppose we run the process with choices made without replacement, defining $F_{m}(k)$ as before, and let $P_m$ be the probability that $r$ independent preferential choices made at time $m$ are all different. If $P_m\to 1$ with high probability, then we can prove in the same way that $F_m(k)/2m\to p_k$, for the following reason. Provided $P_m\to 1$ and $F_m(k-1)/2m\to p_{k-1}$, if $m$ is sufficiently large and $F_m(k)/2m<p_k-\varepsilon$, then $F_{m+1}(k)-F_m(k)$ dominates
\[
X^-=
\begin{cases}
1 &\text{with prob }\quad 1-B_{r,s}(p_k-\varepsilon)-\eta\,; \\
1-k &\text{with prob }\quad B_{r,s}(p_k-\varepsilon)-B_{r,s}(p_{k-1}-\delta)-\eta\,; \\
2 &\text{with prob }\quad B_{r,s}(p_{k-1}-\delta)+2\eta\,,
\end{cases}
\]
and we can choose $\delta,\eta$ so as to ensure $\mean{X^-}>2p_k$, as before.

For the process without replacement, then, suppose there exist values $\tilde{p}_k$ such that $F_m(k)/2m\to\tilde{p}_k$ with high probability, and $\tilde{p}_k\to 1$ as $k\to\infty$. In that case, by choice of $k$ the probability of $r$ independent preferential choices including more than one of degree exceeding $k$ can be made arbitrarily small for large $m$. The chance of one of $r-1$ specific vertices of degree at most $k$ being chosen approaches 0 as $m\to\infty$, and so $P_m\to 1$. Consequently $\tilde{p}_k=p_k$ for every $k$. Thus our counterexamples to the conjecture of \cite{KR2013} would still contradict the conjecture even if choice without replacement were assumed.

\section{Acknowledgements}
The first author acknowledges support from the European Union through funding under FP7--ICT--2011--8 project HIERATIC (316705).

\end{document}